\titleformat{\section}[block]{\scshape\filcenter\Large}{\thesection.}{.5em}{}
\titleformat{\subsection}[block]{\bfseries\filcenter\large}{\thesubsection.}{.5em}{\medskip}
\titleformat{\subsubsection}[runin]{\bfseries}{\thesubsubsection.}{.5em}{}[.]
\titlespacing{\subsubsection}{0pt}{10pt}{.5em}
\newtheoremstyle{ntheorem}%
	{\topsep}{\topsep}{\itshape}{0pt}{\bfseries}{.}{.5em}%
	{\thmnumber{#2.\hspace{.5em}}\thmname{#1}\thmnote{ (#3)}}
\newtheoremstyle{ndefinition}%
	{\topsep}{\topsep}{\normalfont}{0pt}{\bfseries}{.}{.5em}%
	{\thmnumber{#2.\hspace{.5em}}\thmname{#1}\thmnote{ (#3)}}
\newtheoremstyle{nremark}%
	{\topsep}{\topsep}{\normalfont}{0pt}{\itshape}{.}{.5em}%
	{\thmnumber{}\thmname{#1}\thmnote{ (#3)}}
\theoremstyle{ntheorem}
  	\newtheorem{theorem}[subsubsection]{Theorem}
  	\newtheorem{proposition}[subsubsection]{Proposition}
	\newtheorem{lemma}[subsubsection]{Lemma}
\theoremstyle{ndefinition}
	\newtheorem{example}[subsubsection]{Example}
         \newtheorem{remark}[subsubsection]{Remark}
	\edef\Drop@@{%
		\dimen@=#1\relax
		\dimen@=.5\dimen@
		\A@=-\sinDirection\dimen@
		\B@=\cosDirection\dimen@
		\setboxz@h{%
			\setbox2=\hbox{\kern3\A@\raise3\B@\copy\z@}%
			\dp2=\z@ \ht2=\z@ \wd2=\z@ \box2
			\setbox2=\hbox{\kern\A@\raise\B@\copy\z@}%
			\dp2=\z@ \ht2=\z@ \wd2=\z@ \box2
			\setbox2=\hbox{\kern-\A@\raise-\B@\copy\z@}%
			\dp2=\z@ \ht2=\z@ \wd2=\z@ \box2
			\setbox2=\hbox{\kern-3\A@\raise-3\B@ \noexpand\boxz@}%
			\dp2=\z@ \ht2=\z@ \wd2=\z@ \box2
		}%
		\ht\z@=\z@ \dp\z@=\z@ \wd\z@=\z@ \noexpand\styledboxz@
	}%
\xydef@\Tttip@{\kern2pt \vrule height2pt depth2pt width\z@
	\Tttip@@ \kern2pt \egroup
	\U@c=0pt \D@c=0pt \L@c=0pt \R@c=0pt \Edge@c={\circleEdge}%
	\def\Leftness@{.5}\def\Upness@{.5}%
	\def\Drop@@{\styledboxz@}\def\Connect@@{\straight@{\dottedSpread@\jot}}}
\xydef@\Tttip@@{%
	\dimen@=.25\dimen@
%	\A@=-\sinDirection\dimen@
 	\B@=\cosDirection\dimen@
	\setboxz@h\bgroup\reverseDirection@\line@ \wdz@=\z@ \ht\z@=\z@ \dp\z@=\z@
%	\kern\A@ \raise\B@ \boxz@ \kern\L@c
%	\kern-\L@c \boxz@ \kern\L@c
	{\vDirection@(1,-1)\xydashl@ \xyatipfont\char\DirectionChar}%
	{\vDirection@(1,+1)\xydashl@ \xybtipfont\char\DirectionChar}%
}
\xydef@\ar@form{
	\ifx \space@\next \expandafter\DN@\space{\xyFN@\ar@form}%
	\else\ifx ^\next \DN@ ^{\xyFN@\ar@style}\edef\arvariant@@{\string^}%
	\else\ifx _\next \DN@ _{\xyFN@\ar@style}\edef\arvariant@@{\string_}%
	\else\ifx 0\next \DN@ 0{\xyFN@\ar@style}\def\arvariant@@{0}%
	\else\ifx 1\next \DN@ 1{\xyFN@\ar@style}\def\arvariant@@{1}%
	\else\ifx 2\next \DN@ 2{\xyFN@\ar@style}\def\arvariant@@{2}%
	\else\ifx 3\next \DN@ 3{\xyFN@\ar@style}\def\arvariant@@{3}%
	\else\ifx 4\next \DN@ 4{\xyFN@\ar@style}\def\arvariant@@{4}%
	\else\ifx \bgroup\next \let\next@=\ar@style
	\else\ifx [\next \DN@[##1]{\ar@modifiers{[##1]}}%]
	\else\ifx *\next \DN@ *{\ar@modifiers}%
	\else\addLT@\ifx\next \let\next@=\ar@slide
	\else\ifx /\next \let\next@=\ar@curveslash
	\else\ifx (\next \let\next@=\ar@curveinout %)
	\else\addRQ@\ifx\next \addRQ@\DN@{\ar@curve@}%
	\else\addLQ@\ifx\next \addLQ@\DN@{\xyFN@\ar@curve}%
	\else\addDASH@\ifx\next \addDASH@\DN@{\defarstem@-\xyFN@\ar@}%
	\else\addEQ@\ifx\next \addEQ@\DN@{\def\arvariant@@{2}\defarstem@-\xyFN@\ar@}%
	\else\addDOT@\ifx\next \addDOT@\DN@{\defarstem@.\xyFN@\ar@}%
	\else\ifx :\next \DN@:{\def\arvariant@@{2}\defarstem@.\xyFN@\ar@}%
	\else\ifx ~\next \DN@~{\defarstem@~\xyFN@\ar@}%
	\else\ifx !\next \DN@!{\dasharstem@\xyFN@\ar@}%
	\else\ifx ?\next \DN@?{\ar@upsidedown\xyFN@\ar@}%
	\else \let\next@=\ar@error
	\fi\fi\fi\fi\fi\fi\fi\fi\fi\fi\fi\fi\fi\fi\fi\fi\fi\fi\fi\fi\fi\fi\fi \next@}
\newcommand{\dfl}{\Rightarrow}
\newcommand{\qfl}{\xymatrix@1@C=10pt{\ar@4 [r] &}}
\newcommand{\opfl}[1]{\xymatrix @C=1.5em {\strut \ar@{->>} [r] ^-{#1} & \strut}}
\newcommand{\odfl}[1]{\xymatrix @C=1.5em {\strut \ar@2 [r] ^-{#1} & \strut}}
\newcommand{\otfl}[1]{\xymatrix @C=1.5em {\strut \ar@3 [r] ^-{#1} & \strut}}
\newcommand{\oqfl}[1]{\xymatrix @C=1.5em {\strut \ar@4 [r] ^*+{#1} & \strut}}
\newcommand{\ens}[1]{\left\{ #1 \right\}}
\newcommand{\ie}{\emph{i.e.}}
\renewcommand{\phi}{\varphi}
\renewcommand{\epsilon}{\varepsilon}
\definecolor{orange}{rgb}{1,0.55,0}
\definecolor{vert}{rgb}{0,0.45,0}
\newcommand{\ifthen}[2]{\ifthenelse{#1}{#2}{}}
\renewcommand{\to}{\longrightarrow}
\begin{document}

\thispagestyle{empty}

\begin{center}

% Titre
\begin{Large}
{Finite convergent presentation of plactic monoid for type C\footnote{This work is partially supported by the French National Research Agency, ANR-13-BS02-0005-02.}}
\end{Large}
\bigskip\hrule height 1.5pt \bigskip

% Auteurs
\begin{large}\begin{uppercase}
{Nohra Hage}
\end{uppercase}\end{large}
%\end{center}

\vspace*{\stretch{1}}

% Résumé, mots-clefs et classification
\begin{small}\begin{minipage}{14cm}
\noindent\textbf{Abstract -- }
 We give an explicit presentation for the plactic monoid for type C using admissible column generators. Thanks to  the combinatorial properties of symplectic tableaux,  we prove that this presentation is finite and convergent. We obtain as a corollary that plactic monoids for type C satisfy homological finiteness properties.

\bigskip\noindent\textbf{Keywords --} Plactic monoid; crystal graphs; symplectic tableau;  convergent presentations.

\end{minipage}\end{small}

\vspace*{\stretch{1}}

%%% Table des matières
\begin{small}\begin{minipage}{14cm}
\renewcommand{\contentsname}{}%\large\MakeUppercase{Contents}}
\setcounter{tocdepth}{2}
\tableofcontents
\end{minipage}
\end{small}

\vspace*{\stretch{1}}
\end{center}

\today
\newpage

%%%%%%%%%%%%%%%%%%%%%%%%%%%%%%%%%
\section{Introduction}
\label{introduction}
%%%%%%%%%%%%%%%%%%%%%%%%%%%%%%%%

The plactic monoid was discovered by Knuth in~\cite{Knuth70},  using the theory of  tableaux defined  by Schensted in~\cite{Schensted61} in his study of the longest increasing subsequence of a word.  Lascoux and Sch\"{u}tzenberger in~\cite{LascouxSchutzenberger81} used the plactic monoid to give a proof of the Littlewood--Richardson rule for the decomposition of  tensor products of irreducible modules on the Lie algebra of traceless square matrices. The plactic monoid has found several applications in algebraic combinatorics and representation theory~\cite{LascouxSchutzenberger81,LascouxLeclercThibon95,Fulton97,Lothaire02}. More recently, the plactic monoid was investigated by rewriting methods~\cite{KubatOkninski14, BokutChenChenLi15, CainGrayMalheiro15}.

Consider the ordered alphabet $\mathcal{A}_{n} = \{1<2<\ldots <n\}$. For every word $w$ over the free monoid $\mathcal{A}_{n}^{\ast}$, a unique tableau $P(w)$ can be computed using  Schensted's insertion algorithm (column insertion)~\cite{Schensted61}. One can define a relation $\sim$ on the free monoid $\mathcal{A}_{n}^{\ast}$ by:
\[
u\sim v \text{ }\text{ if and only if } \text{} P(u)=P(v)
\]
for all $u$ and $v$ in $\mathcal{A}_{n}^{\ast}$. Then the quotient $\mathbf{P_{n}}(A):=\mathcal{A}_{n}^{\ast}/\sim$ is called the  plactic  monoid. The plactic monoid can be  also described as the quotient of $\mathcal{A}_{n}^{\ast}$ by the congruence generated by the Knuth relations:
\begin{equation}
\label{Knuthrelations}
\big\{\;  xzy = zxy \;\big|\;1\leq x<y\leq z\leq n  \;\big\}
\;\cup \;
\big\{\;  yxz = yzx  \;\big|\;1\leq x\leq y < z \leq n 
\;\big\}
\end{equation}
which is called the \emph{Knuth presentation}. 
\bigskip

Thanks to Kashiwara's theory of crystal bases~\cite{Kashiwara91, JimboMisraMiwaOkado91, KashiwaraNakashima94, Kashiwara94},  plactic monoids can be defined for all classical simple Lie algebras.  To each classical simple Lie algebra, one associates a finite alphabet $S$ indexing a basis of the vector representation $V$ of the algebra. Two words $u$ and $v$ in the monoid $S^{\ast}$ are plactic congruent if they appear in the same place in isomorphic connected components of the crystal graph of the  representation $\underset{l}{\bigoplus}V^{\otimes l}$. 

The plactic monoid introduced by Schensted and Knuth corresponds to the representations of the simple Lie algebra of traceless square matrices which is of type A, and known as the plactic monoid of type A.  Similarly, plactic monoids of type C, B and D correspond  respectively to the representations of the symplectic Lie algebra,  the odd-dimensional  orthogonal Lie algebra  and the even-dimensional orthogonal Lie algebra.

Lascoux, Leclerc and Thibon defined in~\cite{LascouxLeclercThibon95} the plactic monoid of type A using the theory of crystal bases and gave a presentation of the plactic monoid $\mathbf{P_{n}}(C)$ of type C  without proof.  Lecouvey in~\cite{Lecouvey02} and Baker in~\cite{Baker00} described independently the monoid $\mathbf{P_{n}}(C)$ using also Kashiwara's theory of crystal bases.

Plactic monoids can be also defined for any semisimple Lie algebra  using Littelmann's path model, see~\cite{Littelmann96}. 
\bigskip

We deal with presentations of monoids from the rewriting theory perspective. In this context, relations are oriented and are considered as rewriting steps. A presentation terminates if it has no infinite rewriting sequence. A terminating presentation is confluent if all its critical branchings resolve. A presentation is convergent if it terminates and is confluent. Having a  finite convergent presentation  of a monoid has many advantages: for examples the computation of normal form and the computation of a free finitely generated resolution of the monoid which allows deduction of  some homological properties~\cite{Kobayashi90,Anick86}. An open problem was to find a finite convergent presentation of plactic monoids.

In~\cite{CainGrayMalheiro15},  Cain, Gray and  Malheiro answered positively this question in type A. They constructed a finite presentation  by adding the column generators, in the spirit of Kapur and Narendran in~\cite{KapurNarendran85}. They proved the convergence of this presentation using the combinatorial properties of Young tableaux.  But the  above question was still open for plactic monoids for the others types. 
\bigskip

In this work, we consider the plactic monoid  for type C constructed by Lecouvey in~\cite{Lecouvey02}. We construct a finite convergent presentation for this monoid, again by adding new generators.  The generating set of this presentation contains  the finite set of admissible columns introduced by Kashiwara and Nakashima in~\cite{KashiwaraNakashima94}. The right side of the relations of this presentation is the result of the Lecouvey's insertion of an admissible column into another one. In other words, we show that the right-hand sides of rewriting rules are symplectic tableaux consisting of at most two admissible columns. As a consequence,  we deduce that plactic monoids for type C satisfy  some homological finiteness properties.
\bigskip

The confluence of our  presentation is proved  using the unique normal form property and not by studying the confluence of the critical branchings. This method did not allow us to construct a coherent presentation  of the monoid $\mathbf{P_{n}}(C)$. Such a presentation extends the notion of a presentation of the monoid by homotopy generators taking into account the relations among the relations.  An interesting work would be to construct coherent presentations for  the  monoid $\mathbf{P_{n}}(C)$ which allow to describe the notion of an action of this monoid on categories, see~\cite{GaussentGuiraudMalbos14}. A coherent  presentation of a monoid is a first step to a  polygraphic resolution of the monoid, that is, a categorical cofibrant replacement of the monoid which can be  used to compute its  homological invariants~\cite{GuiraudMalbos12advances}. In~\cite{Lopatkin14}, Lopatkin  constructed Anick's resolution for the  monoid $\mathbf{P_{n}}(A)$ starting with a finite convergent presentation. Our  finite convergent presentation of  the  monoid $\mathbf{P_{n}}(C)$ should allow us to compute a polygraphic resolution of it which is a generalisation of  Anick's resolution. 
\bigskip

While submitting this paper, we came across the work of Cain, Gray and  Malheiro~\cite{CainGrayMalheiro14}. They construct by a  different method,  similar finite convergent presentations for plactic monoids of type B, C and D. They use Lecouvey's presentations of plactic monoids whereas we use Lecouvey's insertion algorithm. For type A, using Schensted's column insertion  we can insert a column $V$ into a column $U$ and during this insertion  either we add boxes  at the bottom of the column $U$ filled by the elements of $V$  or  the elements of  the column $V$ bump some boxes of $U$ into a new column. Thus we have directly that the result is a tableau consisting of at most two columns where the right one contains fewer elements than $U$. Note that it is more difficult to prove the later result using the Knuth presentation. For type C,  using Lecouvey's insertion we generalise this construction  and we prove  the  same results in Lemma~\ref{First Lemma} and Lemma~\ref{Second Lemma} for admissible columns, which is  in some sense more natural and more combinatorial than the other method.

In~\cite{Lecouvey03}, Lecouvey gave presentations for plactic monoids of type B and D and generalized the notion of admissible column to these types. He also introduced the notion of  \emph{orthogonal tableaux}~{\cite[Section~3]{Lecouvey03}}. Let $\mathcal{B}_{n}$ and $\mathcal{D}_{n}$  be respectively the alphabets corresponding to type B and D. Using the same insertion's algorithm  described in Sections~\ref{Insertioncolumn} and~\ref{insertiontableau}, Lecouvey showed that for any word $w$ in  the free monoids $\mathcal{B}_{n}^{\ast}$ and $\mathcal{D}_{n}^{\ast}$, one can compute a unique ortogonal tableau $P(w)$ which its reading is equal to $w$ in the corresponding plactic monoid.

Using the same strategy as in this paper, one can construct finite convergent presentations of plactic monoids for type B and D by introducing admissible column generators. The rewriting system rewrites two admissible columns that do not form an orthogonal tableau to their corresponding orthogonal tableau form. Since Kashiwara's theory of crystal graphs exists for type B and D, one can show that  Lemmas~\ref{First Lemma} and~\ref{Second Lemma} are also true for these types. Hence by this approach,  we should obtain the same result as Theorem~\ref{maintheorem} for plactic monoids of type B and D.
\bigskip

The paper is organised as follows.  We first recall in Section~\ref{preliminaries} the notion of $2$-polygraphs which corresponds to a presentation of a monoid by a rewriting system, that is a presentation by generators and oriented relations. After that, we present  some properties of crystal  graphs and Young diagram. In Section~\ref{Plactic monoid},  we present the definitions and  some properties of admissible columns and  symplectic tableaux. We describe the column insertion algorithm for type C introduced by Lecouvey in~\cite{Lecouvey02} and  a definition of the plactic monoid of type C. In Section~\ref{Convergent presentation}, we give a finite and convergent presentation of the plactic monoid for type C using admissible column generators.

%%%%%%%%%%%%%%%%%%%%%%%%%%%%%%
%%%%%%%%%%%%%%%%%%%%%%%%%%%%%%
\section{Preliminaries}
\label{preliminaries}
%%%%%%%%%%%%%%%%%%%%%%%%%%%%%%
%%%%%%%%%%%%%%%%%%%%%%%%%%%%%%

%%%%%%%%%%%%%%%%%%%%%%%%%%%%%%%%%%%%%%%%%%%%%%%%%%%%%
\subsection{ Rewriting properties of $2$-polygraphs}
%%%%%%%%%%%%%%%%%%%%%%%%%%%%%%%%%%%%%%%%%%%%%%%%%%%%%
 
We give some rewriting properties of the presentations of monoids. These presentations are studied in terms of polygraphs in~\cite{GuiraudMalbos14}.  A \emph{ $2$-polygraph } is a triple $\Sigma=(\Sigma_{0},\Sigma_{1},\Sigma_{2})$ made of an oriented graph
 \[
\xymatrix{
\Sigma_0 
& 
\Sigma_1
\ar@<+0.5ex>[l]^{t_0}
\ar@<-0.5ex>[l]_{s_0}
}
\]
where $\Sigma_{0}$ and $\Sigma_{1}$ are respectively the sets of  objects, or generating $0$-cells and of arrows, or generating  $1$-cells and $s_{0}$, $t_{0}$ denote the source and target  maps.  The set $\Sigma_{2}$ is a globular extension of the free category $\Sigma_{1}^{\ast}$, that is,  a set of $2$-cells equipped with source and target maps $s_{1},t_{1}: \Sigma_{2}\to \Sigma_{1}^{\ast}$ and relating parallel $1$-cells
 \[
 \xymatrix@C=2em{
 & x
 	\ar@/^3ex/ [rr] ^-{s_{1}(\alpha)} ^{}="src"
 	\ar@/_3ex/ [rr] _-{t_{1}(\alpha)} ^{}="tgt"
 	\ar@2 "src"!<0pt,-10pt>;"tgt"!<0pt,10pt> ^-{\alpha}
 && y
 & 
 }
\]
such that $s_{0}s_{1}(\alpha) = s_{0}t_{1}(\alpha)$ and $t_{0}s_{1}(\alpha) = t_{0}t_{1}(\alpha)$, where $s_{1}(\alpha),t_{1}(\alpha)\in \Sigma_{1}^{\ast}$. In our case,  we deal with monoids, that is, categories with only one $0$-cell, so that the set $\Sigma_{0}$ contains only one $0$-cell. In the sequel, the set $\Sigma_{0}$ is omitted and a $2$-polygraph is denoted by $\Sigma = (\Sigma_{1},\Sigma_{2})$.

A monoid $M$ is presented by a $2$-polygraph $\Sigma$ if $M$ is isomorphic to the quotient of the free monoid $\Sigma_{1}^{\ast}$ by the congruence generated by $\Sigma_{2}$. Then the generating $1$-cells are  the generators of $M$ and the generating $2$-cells correspond to the relations of $M$. Note that we will also say words for the $1$-cells of $\Sigma_{1}^{\ast}$ in a case of monoid. Denote by $l(w)$ the length of a word $w$ on $\Sigma_{1}^{\ast}$.

 A $2$-polygraph $\Sigma$ is \emph{finite} if $\Sigma_{0}$, $\Sigma_{1}$ and $\Sigma_{2}$ are finite. For two words $u$ and $v$ in $\Sigma_{1}^{\ast}$,  we write $u\dfl v$ for  a $2$-cell  in $\Sigma_{2}$.  A \emph{ rewriting step } of $\Sigma$ is a $2$-cell in $\Sigma_{2}$ with  shape 
 \[
 \xymatrix@C=2em{
 x
 	\ar [r] ^-{w} 
 & y 
 	\ar@/^3ex/ [rr] ^-{u} ^{}="src"
 	\ar@/_3ex/ [rr] _-{v} ^{}="tgt"
 	\ar@2 "src"!<0pt,-10pt>;"tgt"!<0pt,10pt> ^-{\phi}
 && z 	\ar [r] ^-{w'}
 & t
 }
\]
 where $\phi$ is a $2$-cell in $\Sigma_{2}$ and $w$ and $w'$ are words of $\Sigma_{1}^{*}$. A \emph{rewriting sequence} of $\Sigma$ is a finite or infinite sequence of rewriting steps. We say that $u$ rewrites into $v$ if $\Sigma$ has a  nonempty rewriting sequence from $u$ to $v$. A word of $\Sigma_{1}^{\ast}$ is a \emph{ normal form } if $\Sigma$ has no rewriting step with source $u$. A  normal form  of $u$ is a word $v$ of $\Sigma_{1}^{\ast}$ that is a normal form and such that $u$ rewrites into $v$. We say that $\Sigma$ \emph{ terminates } if it has no infinite rewriting sequences. We say that $\Sigma$ is \emph{ confluent }  if for any words $u$, $u'$ and $u''$ of $\Sigma_{1}^{\ast}$, such that $u$ rewrites into $u'$ and $u''$, there exists a word $v$ in $\Sigma_{1}^{\ast}$ such that $u'$ and $u''$ rewrite into $v$, that is, we have the following diagram 
\[\xymatrix @C=1.5em @R=0.5em {
& {u'}
  \ar@2{.>}[dr] ^-{}
\\
{u}
	\ar@2[ur] ^-{}
	\ar@2[dr] _-{}
&&{v}	
\\
& {u''}
\ar@2{.>}[ur] _-{}
}
\]
We say that $\Sigma$ is \emph{convergent} if it terminates and it is confluent. Note that a terminating $2$-polygraph is convergent if every word admits a unique normal form. 

Two $2$-polygraphs are \emph{ Tietze-equivalent} if they present the same monoid. Two finite $2$-polygraphs are Tietze-equivalent if, and only if, they are related by a finite sequence of elementary Tietze transformations. That is, one of the following transformations:
\begin{itemize}
\item adjunction or elimination of a $1$-cell $x$ and of a $2$-cell $\alpha: u\dfl x$, where $u$ is a $1$-cell of $(\Sigma_{1}\setminus\ens{x})^{\ast}$,
\item adjunction or elimination of a $2$-cell $\alpha: u\dfl v$ such that $u$ and $v$ are related by a nonoriented sequence of $2$-cells all in $\Sigma_{2}\setminus\ens{\alpha}.$ 
\end{itemize}

%%%%%%%%%%%%%%%%%%%%%%%%%%%%%%%%
%%%%%%%%%%%%%%%%%%%%%%%%%%%%%%%%
\subsection{ Crystal graphs}
%%%%%%%%%%%%%%%%%%%%%%%%%%%%%%%%
%%%%%%%%%%%%%%%%%%%%%%%%%%%%%%%%

Consider the following data. Let $\mathfrak{g}$ be a semisimple  Lie algebra. Let $P$ be the weight lattice for $\mathfrak{g}$ and  let $P^{\ast}= \mathrm{Hom}_{\mathbb{Z}}(P,\mathbb{Z})$. Let $\{\alpha_{i}\}_{i\in I}$ be the simple roots of $\mathfrak{g}$ and $\{h_{i}\}_{i\in I}$ the corresponding coroots. The two lattices $P$ and $P^{\ast}$ are free $\mathbb{Z}$-modules of rank $\sharp I$, see~\cite{BourbakiLie4-6}.  Let $\langle\cdot,\cdot\rangle: P^{\ast}\times P \to \mathbb{Z}$ be the canonical pairing. 
\medskip

A \emph{crystal} is a set $B$ endowed with applications
\[
\begin{array}{rl}
\textrm{w}t&:B\to P,\\
\epsilon_{i}&:B\to \mathbb{Z}\cup \{-\infty\},\\
\varphi_{i}&:B\to \mathbb{Z}\cup \{-\infty\},\\
\widetilde{e}_{i}&: B\to B\cup \{0\},\\
\widetilde{f}_{i}&:B\to B\cup \{0\}.
\end{array}
\]
satisfying  the following properties :
\begin{itemize}
\item $\varphi_{i}(b) = \epsilon_{i}(b)+ \langle h_{i},\textrm{w}t(b)\rangle$, for any $i$.
\item If $b\in B$ satisfies $\widetilde{e}_{i}(b)\neq 0$, then $\epsilon_{i}(\widetilde{e}_{i}(b))=\epsilon_{i}(b)-1,  \varphi_{i}(\widetilde{e}_{i}(b))=\varphi_{i}(b)+1$ and $ \textrm{w}t(\widetilde{e}_{i}(b)) = \textrm{w}t(b)+\alpha_{i}$.

\item If $b\in B$ satisfies $\widetilde{f}_{i}(b)\neq 0$, then $\epsilon_{i}(\widetilde{f}_{i}(b))=\epsilon_{i}(b)+1,  \varphi_{i}(\widetilde{f}_{i}(b))=\varphi_{i}(b)-1$ and $\textrm{w}t(\widetilde{f}_{i}(b)) = \textrm{w}t(b)-\alpha_{i}$.

\item For $b_{1},b_{2}\in B$, $b_{2}=\widetilde{f}_{i}(b_{1})$ if and only if $b_{1}=\widetilde{e}_{i}(b_{2})$.
\item If $\varphi_{i}(b)=-\infty$, then $\widetilde{e}_{i}(b) = \widetilde{f}_{i}(b) = 0$.
\end{itemize}
\medskip

The tensor product of two crystals $B_{1}$ and $B_{2}$ is defined by $$B_{1}\otimes B_{2} =\{b_{1}\otimes b_{2}\;\big|\; b_{1}\in~B_{1}, b_{2}\in B_{2}\}.$$
The set $B_{1}\otimes B_{2}$ is endowed with a structure of crystal by defining the action of $\widetilde{e}_{i}$ and $\widetilde{f}_{i}$ on the tensor product  by 
\[\widetilde{f}_{i}(b_{1}\otimes b_{2}) =\left\{
\begin{array}{lll}
\widetilde{f}_{i}(b_{1})\otimes b_{2} && \mbox{ if } \varphi_{i}(b_{1})>\varepsilon_{i}(b_{2})\\
b_{1}\otimes \widetilde{f}_{i}(b_{2}) && \mbox{ if } \varphi_{i}(b_{1})\leq \varepsilon_{i}(b_{2})
\end{array}
\right.\]
\[\widetilde{e}_{i}(b_{1}\otimes b_{2}) =\left\{
\begin{array}{lll}
b_{1}\otimes \widetilde{e}_{i}(b_{2}) && \mbox{ if } \varphi_{i}(b_{1})<\varepsilon_{i}(b_{2})\\
\widetilde{e}_{i}(b_{1})\otimes b_{2} && \mbox{ if } \varphi_{i}(b_{1})\geq \varepsilon_{i}(b_{2})
\end{array}
\right.\]
where $\varepsilon_{i}(b_{1}) = \text{max}\{k \;\big|\; \widetilde{e}_{i}^{k}(b_{1})\neq 0 \}$ and  $\varphi_{i}(b_{1}) = \text{max}\{k \;\big|\; \widetilde{f}_{i}^{k}(b_{1})\neq 0 \}$.
\medskip

\emph{Crystal graphs} are oriented  graphs with labeled arrows. The set of vertices is $B$  and an arrow $a\overset{i}{\rightarrow} b$ means that $\widetilde{f}_{i}(a) = b$ and $\widetilde{e}_{i}(b) = a$.
\bigskip

The \emph{symplectic Lie algebra} $\mathfrak{sp}_{2n}$ is the Lie algebra  of $2n$ by $2n$ matrices $A$, for $n>0$,  that satisfy  
$$\Omega A + A^\mathrm{T} \Omega = 0,$$ 
where $A^\mathrm{T}$ is the transpose of $A$ and $ \Omega = \begin{pmatrix} 0 & I_n \\ -I_n & 0 \\ \end{pmatrix}$. 

This Lie algebra is a semisimple Lie algebra of  type C and  we denote by $\Lambda_{i}$, for $i=1,\ldots,n$, its fundamental weights, see~\cite{BourbakiLie4-6}. In this case, $P=\underset{i}{\bigoplus}\mathbb{Z}\Lambda_{i}$.

Let $V_{n}=\mathbb{C}^{2n}$ be the vector representation of $\mathfrak{sp}_{2n}$, this representation is of dimension $2n$ and we index a basis of $V_{n}$ by the set 
\[
\mathcal{C}_{n} = \{1,2,\ldots, n,\overline{n},\ldots, \overline{1}\},
\]
totally ordered by  $1 < 2 < \ldots <n<\overline{n}<\ldots <\overline{1}$. Denote by $\mathcal{C}_{n}^{\ast}$ the free monoid over $\mathcal{C}_{n}$. 

Note that every representation of  the  Lie algebra $\mathfrak{sp}_{2n}$ admits a crystal graph. Recall that the crystal graph of  the vector representation $V_{n}$ is :
\[
1\overset{1}{\rightarrow}2\overset{2}{\rightarrow}\ldots \rightarrow n-1\overset{n-1}{\rightarrow}n\overset{n}{\rightarrow}\overline{n}\overset{n-1}{\rightarrow}\overline{n-1}\overset{n-2}{\rightarrow}\ldots \rightarrow\overline{2}\overset{1}\rightarrow\overline{1}.
\]

In~\cite{KashiwaraNakashima94}, Kashiwara and Nakashima  showed that the monoid $\mathcal{C}_{n}^{\ast}$ is a crystal and described a process to compute the action of the crystal operators $\widetilde{e}_{i}$ and $\widetilde{f}_{i}$ on a word $w$ of the monoid $\mathcal{C}_{n}^{\ast}$, for a fixed $i$. First, one considers the word $w_{i}$ obtained by deleting all symbols other that $i, i+1, \overline{i+1}$ and $\overline{i}$ from $w$. One identifies the letters $i$ and $\overline{i+1}$ by the symbol $+$ and the letters $i+1$ and $\overline{i}$ by the symbol $-$. Secondly, we remove the subwords of length $2$ in $w_{i}$ which correspond to the symbol $+-$, \ie, we remove adjacent letters $(i,i+1), (i,\overline{i}), (\overline{i+1}, i+1)$  and $(\overline{i+1},\overline{i})$. Then we obtain a new subword of $w$. The second step of the  process is repeated until it is impossible to remove more letters. Let $r$ and $s$ be respectively the number of letters corresponding to  the symbols $-$ and $+$  in the final subword. 

\begin{itemize}
\item If $r>0$ then $\widetilde{e}_{i}(w)$ is obtained by replacing in $w$ the rightmost element with the symbol $-$ of the final subword, by its corresponding element with  the symbol $+$, \ie, $i+1$ is transformed into $i$ or $\overline{i}$ into $\overline{i+1}$ or for $i=n$, $\overline{n}$ into $n$, and the others elements of $w$ stay unchanged. If $r = 0$, then $\widetilde{e}_{i}(w) = 0$.
\item If $s>0$ then $\widetilde{f}_{i}(w)$ is obtained by replacing in $w$ the leftmost element with  the symbol $+$ of the final subword, by its corresponding element with the  symbol $-$, \ie, $i$ is transformed into $i+1$ or $\overline{i+1}$ into $\overline{i}$ or for $i=n$, $n$ into $\overline{n}$, and the others elements of $w$ stay unchanged. If $s = 0$, then $\widetilde{f}_{i}(w) = 0$.
\end{itemize}

\begin{example}
Consider the word $w = \overline{3}32313\overline{3}233\overline{3}1$. For $i=2$, we have $w_{i} = \overline{3}3233\overline{3}233\overline{3}$. After deleting subwords corresponding to  $+-$, the first subword of $w_{i}$ is $3\overline{3}3\overline{3}$. After repeating this process, the second subword is $3\overline{3}$. We cannot remove new elements from the last subword, then $r=s=1$. Finally, we obtain :
\[\widetilde{e}_{2}(w) = \overline{3}32312\overline{3}233\overline{3}1 \text{ }\text{ and }\text{ }\widetilde{f}_{2}(w) = \overline{3}32313\overline{3}233\overline{2}1.\]
\end{example}
\bigskip

Now, we consider tensor products of the vector representation $V_{n}^{\otimes l}$, for any $l$ and the infinite dimensional representation $\underset{l}{\bigoplus}V_{n}^{\otimes l}$.  The crystal graphs of these representations are denoted by $G_{n,l}$ and $G_{n}$, respectively.   Note that each vertex $x_{1}\otimes x_{2}\otimes \ldots \otimes x_{l}$ of the crystal graph $V_{n}^{\otimes l}$ is identified with the word $x_{1}x_{2}\ldots x_{l}$ in the monoid $\mathcal{C}_{n}^{\ast}$. In other words,  the vertices of $G_{n}$ are indexed by the words of $\mathcal{C}_{n}^{\ast}$ and those of $G_{n,l}$ by the words of length $l$. 
 
In addition, the crystal graph $G_{n,l}$ can be decomposed into connected components. They correspond to the crystal graphs of the irreducible representations  occurring in the decomposition of $V_{n}^{\otimes l}$. If $w$ is a vertex of $G_{n,l}$, the connected component  of $G_{n,l}$ containing $w$ is denoted by $B(w)$. In each connected component,  there exists a unique vertex $w^{0}$ which satisfy the following property:
\[\widetilde{e}_{i}(w^{0}) = 0, \text{ } \text{ for } i = 1,\ldots, n.\]
This vertex is called the \emph{ vertex of highest  weight}, and its weight is
\[
\textrm{wt}(w^{0}) = d_{n}\Lambda_{n} + \overset{n-1}{\underset{i=1}{\sum}}(d_{i} - d_{i+1})\Lambda_{i},
\]
where $d_{i}$ is the number of letters $i$ in $w^{0}$ minus the number of letters $\overline{i}$. Two connected components are isomorphic if
there is a weight-preserving labeled digraph isomorphism from one to the other.  Note that this isomorphism is unique.

\begin{example}
 For $n=2$,  the crystal $B(11)$ is presented by : 
\[\small
\xymatrix@C=4em @R=0.7cm{
&&11
\ar@<+0.4ex>[d] _-{1}\\
&& 21
 	\ar@<+0.4ex>[dl] _-{1}
 	\ar@<+0.4ex>[dr]^-{2}
 \\
&22
	\ar@<+0.4ex>[d]_-{2}
&& \bar{2}1
\ar@<+0.4ex>[d]^-{1}
\\
&\bar{2}2
\ar@<+0.4ex>[d]_-{2}
&& \bar{1}1
\ar@<+0.4ex>[d]^-{1}
\\
&\bar{2}\bar{2}
\ar@<+0.4ex>[dr]_-{1}
&&\bar{1}2
\ar@<+0.4ex>[dl]^-{2}
\\
&&\bar{1}\bar{2}
\ar@<+0.4ex>[d]^-{1}
\\
&&\bar{1}\bar{1}
 } 
\]
where the vertices are labeled by words. In this case, the vertex of highest weight is $11$ and its weight is $2\Lambda_{1}$.
\end{example}

%%%%%%%%%%%%%%%%%%%%%%%%%%%%%%%%%%%%%%%%%%
\begin{lemma}[\cite{KashiwaraNakashima94}]
\label{highestweight}
For any words $u$ and $v$ in $\mathcal{C}_{n}^{\ast}$, the word $uv$ is  a vertex of highest weight of a connected component of $G_{n}$ if, and only if, $u$ is a vertex of highest weight and $\varepsilon_{i}(v)\leq \varphi_{i}(u)$ for any $i= 1,\ldots, n$. 
\end{lemma}

For more details about crystal graphs, the reader is referred to~\cite{Kashiwara91,Kashiwara94,KashiwaraNakashima94}.

%%%%%%%%%%%%%%%%%%%%%%%%%%%%%%%%
\subsection{Young diagram}
%%%%%%%%%%%%%%%%%%%%%%%%%%%%%%%%
A \emph{Young diagram} is a collection of boxes in left-justified rows, where each row has the same or shorter length than the one above it. 

Let $\lambda = \overset{n}{\underset{i=1}{\sum}}\lambda_{i}\Lambda_{i}$ be the highest weight of an irreducible representation of $\mathfrak{sp}_{2n}$,  with $\lambda_{i}\geq 0$. Note  that $\lambda$ corresponds to the Young diagram as follows. For $\lambda$, we associate  the Young diagram  $Y(\lambda)$  containing $\lambda_{i}$ columns of height $i$. We say that this Young diagram has shape $\lambda$ and  the number of its boxes  is equal to $\left| \lambda\right|= \overset{n}{\underset{i=1}{\sum}}\lambda_{i}i.$ 

\begin{example}
The Young diagram $Y(2\Lambda_{1} + 3\Lambda_{2}+ \Lambda_{3})$ is
\[\begin{tabular}[c]{|l|lllll}\hline
&\multicolumn{1}{|l|}{}&\multicolumn{1}{|l|}{}&\multicolumn{1}{|l|}{}&\multicolumn{1}{|l|}{}&\multicolumn{1}{|l|}{}\\\cline{1-6}
&\multicolumn{1}{|l|}{}&\multicolumn{1}{|l|}{}&\multicolumn{1}{|l|}{}&\\\cline{1-4}
&\\\cline{1-1}
\end{tabular}.
\]
\end{example}

Denote by $B(\lambda)$ the  connected component of the crystal graph such  that its vertex of highest weight has weight $\lambda$.

%%%%%%%%%%%%%%%%%%%%%%%%%%%%%%%%%%%%%%%
%%%%%%%%%%%%%%%%%%%%%%%%%%%%%%%%%%%%%%%
\section{Plactic monoid for type C}
\label{Plactic monoid}
%%%%%%%%%%%%%%%%%%%%%%%%%%%%%%%%%%%%%%%
%%%%%%%%%%%%%%%%%%%%%%%%%%%%%%%%%%%%%%%

%%%%%%%%%%%%%%%%%%%%%%%%%%%%%%%%%%%%
\subsection{Symplectic Tableaux}
\label{symplectic tableaux}
%%%%%%%%%%%%%%%%%%%%%%%%%%%%%%%%%%%%

A \emph{ column } for type C is a Young diagram $U$ consisting of  one column  filled by letters of $\mathcal{C}_{n}$ strictly increasing from top to bottom. We call the \emph{reading} of  a column $U$ the word $\textrm{w}(U)$  obtained by reading the letters of  $U$ from top to bottom.  The height of a column $U$ is the number of letters in $U$ and denoted by $h(U)$. A word $w$ is a column word if there exists a column $U$ such that $w = \textrm{w}(U)$. 

For example the Young diagram 
\[U=
\begin{tabular}
[c]{|l|}\hline
$\mathtt{1}$ \\ \hline
$\mathtt{2}$\\ \hline
$\mathtt{3}$\\ \hline
$\mathtt{\overline{6}}$\\ \hline
$\mathtt{\overline{5}}$\\ \hline
\end{tabular}
\]
is a column. Its reading is $\textrm{w}(U)= 123\overline{6}\,\overline{5}.$

In~\cite{KashiwaraNakashima94}, Kashiwara and Nakashima  introduced the notion of admissible column. Let $\textrm{w}(U) = x_{1}\ldots x_{h(U)}$ be the reading of a column $U$.  For us,  the column $C$ is \emph{ admissible } if for $m=1,\ldots , h(U)$,  the number $N(m)$ of letters $x$ in $U$ such that $x\leq m$ or $x\geq \overline{m}$ satisfies $N(m)\leq m$.

Let $U$ be a column and $I=\{x_{1}>\ldots > x_{r}\}$ be the set of unbarred letters such that $x_{i}, \overline{x}_{i}\in U$ , for $i=1,\ldots, r$.  The column $U$ can be \emph{split} if there exists a set of unbarred letters $J = \{y_{1}>\ldots >y_{r} \}$ containing $r$ elements of $\mathcal{C}_{n}$ such that : 
\begin{itemize}
\item $y_{1}$ is the greatest letter of $\mathcal{C}_{n}$ satifying $y_{1} < x_{1}, y_{1}\notin U$ and $\overline{y_{1}}\notin U$,
\item for $i=2,\ldots, r,$ $y_{i}$ is the greatest letter of $\mathcal{C}_{n}$ such that $y_{i}< \text{min}(y_{i-1}, x_{i})$, $y_{i}\notin U$ and 
$\overline{y_{i}}\notin U$.
\end{itemize}
Denote by $rU$ the column obtained by changing in $U$, $\overline{x}_{i}$ into $\overline{y}_{i}$ for each letter $x_{i}$ in the set $I$ up to reordering. Denote by $lU$ the column obtained by changing in $U$, $x_{i}$ into $y_{i}$ for each letter $x_{i}$ in the set $I$ up to reordering.

\begin{proposition}[{\cite[Section~4]{Sheats99}}]
A column $U$ is admissible if,  and only if,  it can be split.
\end{proposition}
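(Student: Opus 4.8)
The plan is to turn the admissibility inequalities into a bipartite matching condition and to recognise the splitting procedure as a greedy algorithm solving that matching problem. First I would reformulate admissibility by a counting argument. Partition the entries of $C$ into three kinds: the $r$ \emph{pairs} $\{x_i,\overline{x_i}\}$ with $x_i$ unbarred and $\overline{x_i}$ also in $C$; the \emph{lone unbarred} entries $u$ with $\overline u\notin C$; and the \emph{lone barred} entries $\overline v$ with $v\notin C$. Correspondingly, every unbarred letter $\ell\in\{1,\dots,n\}$ is of exactly one of four types: $\ell=x_i$ for some $i$; $\ell$ is a lone unbarred entry; $\overline\ell$ is a lone barred entry; or both $\ell\notin C$ and $\overline\ell\notin C$, in which case I call $\ell$ \emph{available}. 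For a fixed $m$, let $P(m),U(m),V(m),R(m)$ count respectively the pairs with $x_i\le m$, the lone unbarred entries $\le m$, the lone barred entries $\overline v$ with $v\le m$, and the available letters $\le m$. Counting $\{1,\dots,m\}$ by type gives $m=P(m)+U(m)+V(m)+R(m)$, while the number $N(m)$ of entries of $C$ that are $\le m$ or $\ge\overline m$ equals $2P(m)+U(m)+V(m)$ (a pair contributes $2$, each lone entry contributes $1$). Subtracting, the admissibility inequality $N(m)\le m$ is equivalent to $P(m)\le R(m)$; and since $N(m)$ never exceeds $h(C)$, this holds automatically for $m\ge h(C)$. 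Hence $C$ is admissible if and only if $P(m)\le R(m)$ for every $m\ge 1$.

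Next I would read this as a Hall condition. Form the bipartite graph whose left vertices are the $r$ pairs, whose right vertices are the available letters, and with an edge between pair $i$ and an available letter $a$ whenever $a<x_i$. For a set $S$ of pairs the neighbourhood is $\{a\text{ available}:a<\max_{i\in S}x_i\}$, so the only relevant instances of Hall's inequality $|N(S)|\ge|S|$ are those where $S$ consists of all pairs with $x_i\le x_{i^\ast}$ for some $i^\ast$; as $x_{i^\ast}\in C$ is not available, that instance reads exactly $R(x_{i^\ast})\ge P(x_{i^\ast})$. So the condition of the previous paragraph is precisely Hall's condition, and the marriage theorem yields an injection $i\mapsto y_i$ from pairs to available letters with $y_i<x_i$. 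A bubble‑sort style exchange argument then makes this injection order‑reversing: if $i<j$ but $y_i<y_j$, swap $y_i$ and $y_j$; since $y_i<y_j<x_j<x_i$ both constraints $y_j<x_i$ and $y_i<x_j$ survive, and inversions decrease. A letter below the unbarred $x_i$ is automatically unbarred, so this produces exactly a set $J=\{y_1>\dots>y_r\}$ of unbarred letters with $y_i<x_i$ and each $y_i\notin C$, $\overline{y_i}\notin C$. Conversely, the greedy choice in the definition of splitting — $y_1$ the greatest available letter below $x_1$, then $y_i$ the greatest available letter below $\min(y_{i-1},x_i)$ — satisfies, by induction, $y_i\ge z_i$ for any valid order‑reversing choice $z_1>\dots>z_r$, so the greedy procedure succeeds whenever such a choice exists. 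Since ``$C$ can be split'' is by definition the success of this greedy procedure, chaining the equivalences proves the proposition.

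\textbf{Main obstacle.} The only delicate point is the bookkeeping in the first step: setting up the four‑way partition of $\{1,\dots,n\}$ and verifying the two identities $m=P(m)+U(m)+V(m)+R(m)$ and $N(m)=2P(m)+U(m)+V(m)$ exactly. Once the admissibility condition is rewritten as $P(m)\le R(m)$, the matching, exchange, and greedy‑domination arguments are all routine. (One could instead argue the two implications separately and more concretely following Sheats and Lecouvey, but the matching reformulation handles both directions uniformly.)
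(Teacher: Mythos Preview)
The paper does not prove this proposition; it merely cites it from Sheats~\cite{Sheats99} and moves on to examples. So there is no ``paper's own proof'' to compare against.

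Your argument is correct and self-contained. The key identity $N(m)=2P(m)+U(m)+V(m)$ together with $m=P(m)+U(m)+V(m)+R(m)$ does reduce admissibility to $P(m)\le R(m)$, and your observation that this only needs checking at $m=x_{i^\ast}$ (since $P(m)$ is constant between consecutive pair values while $R(m)$ is nondecreasing) correctly identifies it with Hall's condition for the bipartite graph you describe. The exchange step and the greedy-domination induction are both sound; in particular the greedy procedure never reuses a letter because it enforces $y_i<y_{i-1}$ strictly, so the induction hypothesis $y_{i-1}\ge z_{i-1}$ gives $z_i<z_{i-1}\le y_{i-1}$ as needed. The reverse implication (split $\Rightarrow$ admissible) falls out immediately since any matching witnesses Hall's condition, hence $P(m)\le R(m)$.

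The Hall-theorem packaging is arguably cleaner than the direct arguments in the literature (Sheats, Lecouvey), which tend to treat the two implications separately; your route handles both at once and makes transparent why the greedy algorithm is the right one. The only thing I would add in a polished write-up is an explicit sentence that the equivalence $P(m)\le R(m)\ \forall m \Leftrightarrow P(x_{i^\ast})\le R(x_{i^\ast})\ \forall i^\ast$ uses that $x_{i^\ast}$ itself is not available, so $R(x_{i^\ast})$ already counts only letters strictly below $x_{i^\ast}$.
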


\begin{example}
Let $\textrm{w}(U)= 2568\overline{8}\,\overline{5}\,\overline{2}$ be the  reading of a column $U$. Then 
\[
\begin{array}{rl}
I=\{ 8>5>2\}&,\quad J=\{7>4>1\},\\
\textrm{w}(rU)= 2568\overline{7}\,\overline{4}\,\overline{1} \quad&\text{ and } \quad\textrm{w}(lU) = 1467\overline{8}\,\overline{5}\,\overline{2}.
\end{array}
\]
 The column $U$ can be split, so that it is an admissible column.
\end{example}

\begin{example}
Let $\textrm{w}(U')= 2346\overline{6}\,\overline{3}\,\overline{2}$ be the  reading of a column $U'$. Then 
$$I=\{6, 3, 2\},\quad y_{1}=5,\quad  y_{2} = 1$$
and we cannot find an element $y_{3}$ of $\mathcal{C}_{n}$ such that $y_{3}<1$. Thus $U'$ cannot be split. 
\end{example}

Using admissible columns, one can construct a tableau  whose columns are admissible with an additional property on them. This tableau is called the symplectic tableau. We will recall its definition in our context. Let $U_{1},\ldots, U_{r}$ be the $r$ columns from left to right of a Young  diagram $T$, then $T$ is denoted by $T=U_{1}\ldots U_{r}$.
\medskip

Let $U_{1}$ and $U_{2}$ be two admissible columns. Consider the following notation :
\begin{itemize}
\item $U_{1}\leq U_{2}$ if $h(U_{1}) \geqslant h(U_{2})$ and the rows of the tableau $U_{1}U_{2}$ are weakly increasing from left to right.
\item $U_{1}\preceq U_{2}$ if $rU_{1}\leq lU_{2}$.
\end{itemize}
\medskip

Consider a tableau $T=U_{1}\ldots U_{r}$, with admissible column $U_{i}$, for $i=1,\ldots, r$. The tableau $T$ is a \emph{ symplectic tableau} if $U_{i}\preceq U_{i+1}$ for $i=1,\ldots, r-1.$  The reading of the symplectic tableau $T$ is  the word $\textrm{w}(T)$ obtained by reading the columns of $T$  from right to left, that is
$$\textrm{w}(T) = \textrm{w}(U_{r})\textrm{w}(U_{r-1})\ldots \textrm{w}(U_{1}).$$

\begin{example}
Let us consider the tableau 
\[T=
\begin{tabular}
[c]{|l|l|l}\hline
$\mathtt{1}$ & $\mathtt{2}$ & \multicolumn{1}{|l|}{$\mathtt{3}$}\\ \hline
$\mathtt{2}$ & $\mathtt{\overline{3}}$\\ \cline{1-2}
$\mathtt{3}$ & $\mathtt{\overline{2}}$\\ \cline{1-2}
\end{tabular}
\]
$T$ is a symplectic tableau. Indeed,
\begin{itemize}
\item $\textrm{w}(U_{1}) = 123,\quad  I_{U_{1}} = J_{U_{1}} = \varnothing$\quad  and\quad  $\textrm{w}(rU_{1}) = \textrm{w}(lU_{1})= 123$.
\item $\textrm{w}(U_{2}) = 2\overline{3}\,\overline{2},\quad  I_{U_{2}} = \{2\},\quad  J_{U_{2}} = \{1\},\quad  \textrm{w}(rU_{2}) = 2\overline{3}\,\overline{1}$\quad  and\quad  $\textrm{w}(lU_{2})~=~1\overline{3}\,\overline{2}.$
\item $\textrm{w}(U_{3}) = 3,\quad  I_{U_{3}} = J_{U_{3}} = \varnothing$\quad  and\quad  $\textrm{w}(rU_{3}) = \textrm{w}(lU_{3}) = 3.$
\end{itemize}
The columns $U_{1}, U_{2}$ and $U_{3}$ can be split, so they are admissible columns. We have $U_{1}~\preceq~U_{2}~\preceq~U_{3}$,  so $T$ is a symplectic tableau and $\textrm{w}(T) = 32\overline{3}\,\overline{2}123.$
\end{example}

\begin{remark}
\label{Highestweightremark}
Let $\lambda = \overset{n}{\underset{i=1}{\sum}}\lambda_{i}\Lambda_{i}$ be a weight with  $\lambda_{i}\geqslant 0$. By Theorem~4.5.1 
in~\cite{KashiwaraNakashima94}, $B(\lambda)$ coincides with the set of symplectic tableaux of shape $\lambda$. More precisely, the readings of these tableaux are the vertices of a connected component of $G_{n,\left| \lambda \right|}$ isomorphic to $B(\lambda)$. The highest weight vertex of this component is the reading of the tableau of shape $\lambda$ filled with $1$ on the $1$st row, $2$ on the $2$nd row, ... , and $n$ on the $n$th row. In particular, the reading of the highest weight vertex of a connected component containing admissible columns of height $p$ is $12\ldots p$.
\end{remark}

%%%%%%%%%%%%%%%%%%%%%%%%%%%%%%%%%%%%%%%%%%%%%%%%%%%%%%%%%%%%%%
%%%%%%%%%%%%%%%%%%%%%%%%%%%%%%%%%%%%%%%%%%%%%%%%%%%%%%%%%%%%%%
\subsection{Definition of the plactic monoid for type C}
%%%%%%%%%%%%%%%%%%%%%%%%%%%%%%%%%%%%%%%%%%%%%%%%%%%%%%%%%%%%%%
%%%%%%%%%%%%%%%%%%%%%%%%%%%%%%%%%%%%%%%%%%%%%%%%%%%%%%%%%%%%%%

Recall that for type A, we consider the ordered alphabet $\mathcal{A}_{n} = \{1<2<\ldots <n\}$. The plactic monoid $\mathbf{P_{n}}(A)$ of type A is presented by the quotient of $\mathcal{A}_{n}^{*}$ by the congruence generated by the Knuth relations~(\ref{Knuthrelations}). This presentation is called the \emph{Knuth presentation}. Note that the Knuth presentation can be also described using Kashiwara's theory of crystal graphs, see~\cite{LascouxLeclercThibon95}.
\bigskip

Let us define the plactic monoid for type C. Let $u$ and $v$ be two words in $\mathcal{C}_{n}^{\ast}$. One can define a relation $\sim$ on the free monoid $\mathcal{C}_{n}^{\ast}$ by : $u\sim v$ if,  and only if, $B(u)$ and $B(v)$ are isomorphic and $u$ and $v$ have the same position in the isomorphic connected component $B(u)$ and $B(v)$ of the crystal $G_{n}$. In other words, $u \sim v$ if and only if there exist $i_{1},\ldots, i_{r}$ such that $u=\widetilde{f}_{i_{i}}\cdot\cdot\cdot \widetilde{f}_{i_{r}}(u^{0})$ and $v=\widetilde{f}_{i_{i}}\cdot\cdot\cdot \widetilde{f}_{i_{r}}(v^{0})$, where $u^{0}$ and $v^{0}$ are the vertices of highest weight of $B(u)$ and $B(v)$.

\begin{proposition}[{\cite[Proposition~3.1.2]{Lecouvey02}}]
Every word $w$ in $\mathcal{C}_{n}^{\ast}$ admits a unique symplectic tableau $T$ such that $w\sim \textrm{w}(T)$. 
\end{proposition}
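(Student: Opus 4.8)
The plan is to prove both existence and uniqueness of the symplectic tableau $T$ with $w \sim \textrm{w}(T)$ by exploiting the decomposition of the crystal graph $G_n$ into connected components together with the identification of each $B(\lambda)$ with the set of symplectic tableaux of shape $\lambda$ recalled at the end of Section~3.1 (via Theorem~4.5.1 of~\cite{KashiwaraNakashima94}).

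\textbf{Existence.} Given a word $w$ on $\mathcal{C}_n^{\ast}$ of length $l$, it is a vertex of the crystal graph $G_{n,l}$, hence lies in exactly one connected component $B(w)$ of $G_{n,l}$. This component is isomorphic to $B(\lambda)$ for a unique weight $\lambda = \sum_i \lambda_i \Lambda_i$ with $\lambda_i \geq 0$ and $|\lambda| = l$, where $\lambda$ is read off from the weight of the highest weight vertex $w^0$ of $B(w)$ using the formula $\textrm{wt}(w^0) = d_n\Lambda_n + \sum_{i=1}^{n-1}(d_i - d_{i+1})\Lambda_i$ from Section~2.2. By the recalled theorem, the vertices of $B(\lambda)$ are precisely the readings of the symplectic tableaux of shape $\lambda$, and the isomorphism $B(w) \xrightarrow{\sim} B(\lambda)$ of crystal graphs commutes with the operators $\widetilde{e}_i, \widetilde{f}_i$; so there is a symplectic tableau $T$ of shape $\lambda$ whose reading $\textrm{w}(T)$ occupies the same position in $B(\lambda) \cong B(w)$ as $w$ does. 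Writing $w = \widetilde{f}_{i_1}\cdots\widetilde{f}_{i_r}(w^0)$, the same sequence of operators applied to the highest weight vertex of $B(\lambda)$ — which is the reading of the standard tableau of shape $\lambda$ filled with $j$ on the $j$th row — produces $\textrm{w}(T)$; hence $w \sim \textrm{w}(T)$ by the very definition of $\sim$.

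\textbf{Uniqueness.} Suppose $T$ and $T'$ are symplectic tableaux with $w \sim \textrm{w}(T)$ and $w \sim \textrm{w}(T')$. Since $\sim$ is transitive, $\textrm{w}(T) \sim \textrm{w}(T')$, so $\textrm{w}(T)$ and $\textrm{w}(T')$ lie in the same position of isomorphic connected components. In particular their highest weight vertices have equal weight, forcing $T$ and $T'$ to have the same shape $\lambda$; then both readings belong to $B(\lambda)$ and occupy the same vertex there. Since the map $T \mapsto \textrm{w}(T)$ from symplectic tableaux of shape $\lambda$ to vertices of $B(\lambda)$ is a bijection (again by the recalled theorem, which states $B(\lambda)$ \emph{coincides with} the set of these tableaux via their readings), we conclude $T = T'$.

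\textbf{Main obstacle.} The genuinely substantive input — that $B(\lambda)$ is in bijection with symplectic tableaux of shape $\lambda$ through the reading map, and that this bijection is compatible with the crystal structure — is exactly Theorem~4.5.1 of~\cite{KashiwaraNakashima94}, which the excerpt permits us to assume. Thus the only real work is bookkeeping: checking that $\sim$ as defined (same position in isomorphic components) is an equivalence relation, that "same position" is well defined independently of the chosen sequence $\widetilde{f}_{i_1}\cdots\widetilde{f}_{i_r}$ applied to the highest weight vertex, and that the shape $\lambda$ is determined by the word. The one point requiring a little care is ensuring that every word $w$ does reach its component's highest weight vertex by repeatedly applying the $\widetilde{e}_i$ (equivalently, that $B(w)$ has a highest weight vertex at all), which is guaranteed by the finiteness of each $B(w) \subseteq G_{n,l}$ and the characterization $\widetilde{e}_i(w^0) = 0$ for all $i$ recalled in Section~2.2.
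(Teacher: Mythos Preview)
The paper does not prove this proposition; it is quoted from~\cite{Lecouvey02} without argument. Your proof is the natural crystal-theoretic one and is essentially correct.

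One inaccuracy: your assertion that $|\lambda| = l$ is false in type~C. Unlike in type~A, the vector representation $V_n$ is self-dual, so $V_n^{\otimes l}$ contains irreducible components $B(\lambda)$ with $|\lambda| < l$; for instance $w = 1\overline{1}$ is already a highest weight vertex of weight~$0$, giving $\lambda = 0$ while $l = 2$. This does not damage your argument, since the relation $\sim$ does not require $w$ and $\textrm{w}(T)$ to have the same length --- only that they sit at the same position in isomorphic connected components of~$G_n$. Simply delete the clause ``and $|\lambda| = l$'' and the proof stands.
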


The unique symplectic tableau $T$ such that $w\sim \textrm{w}(T)$ is denoted by $P(w)$. The quotient $\mathbf{P_n}(C):=~\mathcal{C}_{n}^{\ast}/\sim$ is called the \emph{ plactic  monoid for type C } or  the  \emph { symplectic  plactic   monoid}.
\medskip

Furthermore, the plactic monoid for type C can be presented by generators and relations. Consider the congruence~$\equiv$ generated by the following  families of relations on $\mathcal{C}_{n}^{\ast}$:

$(R_{1}):\left\{
\begin{array}{lll}
yzx\equiv yxz &&\mbox{ for }x\leq y <z \mbox{ with } z\neq\overline{x}\\
xzy\equiv zxy &&\mbox{ for }x<y\leq z \mbox{ with } z\neq\overline{x}
\end{array}
\right.$

$(R_{2}):\left\{
\begin{array}{lll}
y\overline{(x-1)}(x-1)\equiv yx\overline{x} &&\mbox{ for } 1<x \leq n \mbox{ and } x\leq y\leq\overline{x}\\
x\overline{x}y\equiv \overline{(x-1)}(x-1)y &&\mbox{ for } 1<x \leq n \mbox{ and } x\leq y\leq\overline{x}
\end{array}
\right.$

$(R_{3}):$ let $w$ be a nonadmissible column word such that each strict factor of it is an admissible column word. Let $z$ be the lowest unbarred letter such that $z,\overline{z}\in w$ and $N(z) = z+1$. Then $w\equiv\widetilde{w}$, where $\widetilde{w}$ is the column word obtained by erasing $z$ and $\overline{z}$ from $w$.

\begin{remark}
The relations $(R_{1})$ contain the Knuth relations for type A. The relations $(R_{3})$ are called the \emph{contraction relations}.
\end{remark}

\begin{theorem}[{\cite[Theorem~3.2.8]{Lecouvey02}}]
\label{Cross section}
For any words $u$ and $v$ in $\mathcal{C}_{n}^{\ast}$, we have
\[ u\sim v \text{  }\text{ if and only if }\text{ } u\equiv v \text{ }\text{ if and only if  }\text{ } P(u) = P(v).\]
\end{theorem}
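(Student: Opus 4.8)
The plan is to establish the two-step chain of equivalences separately. The equivalence $u\sim v \iff P(u)=P(v)$ is essentially Proposition~3.1.2 of~\cite{Lecouvey02} (stated above): since every word is $\sim$-equivalent to the reading of a unique symplectic tableau, and $\sim$ is an equivalence relation, we have $u\sim v$ iff $P(u)$ and $P(v)$ are the same tableau, iff $\textrm{w}(P(u))\sim\textrm{w}(P(v))$ by a uniqueness argument. So the substance of the theorem is the middle equivalence $u\sim v \iff u\equiv v$, i.e. that the congruence $\equiv$ generated by $(R_{1}),(R_{2}),(R_{3})$ coincides with the plactic congruence $\sim$.

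For the inclusion $\equiv\ \subseteq\ \sim$, I would verify that each defining relation of $\equiv$ relates two $\sim$-equivalent words, i.e. that the two sides have the same $P$-tableau (equivalently, sit in the same position of isomorphic connected components of the crystal $G_n$). For $(R_{1})$ this is the classical type~A Knuth relation check (two length-$3$ words with the same insertion tableau), adapted to the alphabet $\mathcal{C}_n$ with the side condition $z\neq\overline{x}$; for $(R_{2})$ and the contraction relations $(R_{3})$ one checks directly, using the column-splitting description and the combinatorics of admissible columns from Section~3, that the insertion tableaux agree. Concretely, since $\sim$ is a congruence, it suffices to treat the relations as abstract pairs of words and compute $P$ on each side.

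For the reverse inclusion $\sim\ \subseteq\ \equiv$, the strategy is the standard one for plactic-type monoids: show that every word is $\equiv$-congruent to the reading $\textrm{w}(P(w))$ of its symplectic tableau, and that distinct symplectic tableaux have distinct (hence non-$\equiv$-congruent, via the first inclusion) readings. The first part amounts to showing that the rewriting rules underlying $(R_{1}),(R_{2}),(R_{3})$ suffice to transform any word into the reading of a symplectic tableau — one follows Lecouvey's column insertion algorithm step by step and checks that each elementary insertion move (inserting a letter into a column, splitting a column, performing a contraction when a column becomes non-admissible) is realized by a sequence of the relations $R_1$--$R_3$. Combined with $\equiv\subseteq\sim$ and the uniqueness in Proposition~3.1.2, this gives: $u\equiv\textrm{w}(P(u))$ and $v\equiv\textrm{w}(P(v))$, and if $u\sim v$ then $P(u)=P(v)$, so $u\equiv v$.

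The main obstacle is the $\sim\subseteq\equiv$ direction, specifically showing that the column insertion algorithm of~\cite{Lecouvey02} can be simulated entirely within the congruence generated by $(R_{1})$, $(R_{2})$, $(R_{3})$ — the delicate point being the interaction between the splitting of admissible columns and the contraction relations $(R_{3})$, since a naive insertion can temporarily produce a non-admissible column that must then be contracted, and one must verify that the bookkeeping of which pair $(z,\overline z)$ gets erased is consistent with what the relations allow. This is exactly the technical content carried out in~\cite[Section~3.2]{Lecouvey02}, and I would cite that verification rather than reproduce it, while spelling out the easy inclusion $\equiv\subseteq\sim$ in full.
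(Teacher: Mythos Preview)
The paper does not give its own proof of this theorem: it is stated with a citation to \cite[Theorem~3.2.8]{Lecouvey02} and used as a black box. Your proposal is a correct high-level outline of the standard argument (and indeed of Lecouvey's), and you yourself defer the delicate $\sim\subseteq\equiv$ verification to \cite[Section~3.2]{Lecouvey02}; so there is nothing to compare against here beyond noting that the paper simply imports the result.
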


%%%%%%%%%%%%%%%%%%%%%%%%%%%%%%%%%%%%%%%%%%%%%%
\subsection{A bumping algorithm for type C}
%%%%%%%%%%%%%%%%%%%%%%%%%%%%%%%%%%%%%%%%%%%%%%

In~\cite{Schensted61}, Schensted introduces an  insertion algorithm (column insertion)  to compute a unique tableau $P(w)$ for a word $w$ over the alphabet $\mathcal{A}_{n} = \{1<\ldots <n\}$. The column insertion procedure inserts a letter $x$ into a tableau $T$ as follows. Let $y$ be the smallest element of the leftmost column of the tableau $T$ such that $y\geq x$. Then $x$ replaces $y$ in the leftmost column and $y$ is bumped into the next column where the process is repeated. This procedure terminates when the letter which is bumped is greater than all the elements of the next column. Then it is placed at the bottom of that column.  Hence  the tableau $P(w)$ can be computed by starting with the empty word, which is a valid tableau, and iteratively applying Schensted's algorithm.
\bigskip

In~\cite{Lecouvey02}, Lecouvey introduces an insertion scheme to compute the symplectic tableau $P(w)$ analogous to the Schensted's algorithm for type A.
We present in Sections~\ref{Insertioncolumn} and~\ref{insertiontableau} Lecouvey's algorithms and we refer the reader to~\cite{Lecouvey02} for more details.

Let denote  by $x\rightarrow T$ the insertion of a letter $x$ in a symplectic tableau $T$.

%%%%%%%%%%%%%%%%%%%%%%%%%%%%%%%%%%%%%%%%%%%%%%%%%%%%%%%%%%%%%%
\subsubsection{Insertion of a letter in an admissible column}
\label{Insertioncolumn}
%%%%%%%%%%%%%%%%%%%%%%%%%%%%%%%%%%%%%%%%%%%%%%%%%%%%%%%%%%%%%%

Consider a word $w = \textrm{w}(U)x$, where $x$ is a letter and $U$ is an admissible column of height $p$. We have three cases :
\begin{itemize}
\item If $w$ is the reading of an admissible column, then $x\rightarrow U$ is the column obtained by adding a box filled by the letter $x$ at the bottom of $U$. In this case, the highest weight vertex of $B(w)$ is equal to $1\ldots p(p+1).$
\item If $w$ is a nonadmissible column word such that each  strict factor of it is admissible, then $x\rightarrow U$ is the column of reading $\widetilde{w}$ obtained from $w$ by applying one relation of type $(R_{3})$, which is uniquely determined~\cite{Lecouvey02}. In this case, the highest weight vertex of $B(w)$ is equal to $1\ldots p\overline{p}$.
\item If $w$ is not a column word, then $x\rightarrow U$ is obtained by applying relations of type $(R_{1})$ or $(R_{2})$ to the  final subword of length 3 of $w$. On the resulting word, one continues by applying relations of type $(R_{1})$ or $(R_{2})$ to the maximal overlapping factor of length 3 to the left and this procedure is repeated until the first factor of length 3 has been operated. The  result is the reading of a symplectic tableau consisting of a column $U'$ of height $p$ and a column 
\begin{tabular}
[c]{|l|}\hline
$x'$\\ \hline
\end{tabular}, where $x'$ is an element of $\mathcal{C}_{n}$. Then
\[
x\rightarrow U = U'
\begin{tabular}
[c]{|l|}\hline
$x'$\\ \hline
\end{tabular}
= P(w).\]
In this case, the highest weight vertex of $B(w)$ is equal to $1\ldots p1.$
\end{itemize}

\begin{example}
Let us consider the following three examples.
\begin{enumerate}
\item Suppose $\textrm{w}(U) = 36\overline{6}\,\overline{4}$  and $x = \overline{3}$, then
\[\mathtt{\overline{3}}\rightarrow
\begin{tabular}
[c]{|l|}\hline
$\mathtt{3}$\\\hline
$\mathtt{6}$\\\hline
$\mathtt{\overline{6}}$\\\hline
$\mathtt{\overline{4}}$\\\hline
\end{tabular}
=
\begin{tabular}
[c]{|l|}\hline
$\mathtt{3}$\\\hline
$\mathtt{6}$\\\hline
$\mathtt{\overline{6}}$\\\hline
$\mathtt{\overline{4}}$\\\hline
$\mathtt{\overline{3}}$\\\hline
\end{tabular}
.\]
\item Suppose $\textrm{w}(U) = 14\overline{4}\,\overline{3}$ and $x=\overline{2}$, the word $14\overline{4}\,\overline{3}\,\overline{2}$ is a nonadmissible column word such that each strict subword of it is an admissible column word, then we obtain by applying relation of type $(R_{3})$,
\[\mathtt{\overline{2}}\rightarrow
\begin{tabular}
[c]{|l|}\hline
$\mathtt{1}$\\\hline
$\mathtt{4}$\\\hline
$\mathtt{\overline{4}}$\\\hline
$\mathtt{\overline{3}}$\\\hline
\end{tabular}
=
\begin{tabular}
[c]{|l|}\hline
$\mathtt{1}$\\\hline
$\mathtt{\overline{3}}$\\\hline
$\mathtt{\overline{2}}$\\\hline
\end{tabular}
.\]
\item Suppose $\textrm{w}(U) = 14\overline{4}\,\overline{3}$ and $x=2$, then  the word $14\overline{4}\,\overline{3}2$ is not a column word. By applying relations of type $(R_{1})$ or $(R_{2})$, we obtain:
\[14\mathbf{\overline{4}\,\overline{3}2}\equiv 1\mathbf{4\overline{4}2}\overline{3}\equiv \mathbf{142}\overline{4}\,\overline{3}\equiv 412\overline{4}\,\overline{3}.\]
Then
\[\mathtt{2}\rightarrow
\begin{tabular}
[c]{|l|}\hline
$\mathtt{1}$\\\hline
$\mathtt{4}$\\\hline
$\mathtt{\overline{4}}$\\\hline
$\mathtt{\overline{3}}$\\\hline
\end{tabular}
=
\begin{tabular}
[c]{|l|l}\hline
$\mathtt{1}$ & \multicolumn{1}{|l|}{$\mathtt{4}$}\\\hline
$\mathtt{2}$\\\cline{1-1}
$\mathtt{\overline{4}}$\\\cline{1-1}
$\mathtt{\overline{3}}$\\\cline{1-1}
\end{tabular}
.\]
\end{enumerate}
\end{example}

%%%%%%%%%%%%%%%%%%%%%%%%%%%%%%%%%%%%%%%%%%%%%%%%%%%%%%%%%%%%%%
\subsubsection{Insertion of a letter in a symplectic tableau}
\label{insertiontableau}
%%%%%%%%%%%%%%%%%%%%%%%%%%%%%%%%%%%%%%%%%%%%%%%%%%%%%%%%%%%%%%

Let $T=U_{1}\ldots U_{r}$ be a symplectic tableau with admissible column $U_{i}$, for $i=1,\ldots,r,$ and $x$ be a letter. We have three cases:
\begin{itemize}
\item If $\textrm{w}(U_{1})x$ is an admissible column word, then $x\rightarrow T$ is the tableau obtained by adding a box filled by  $x$ on the bottom of $U_{1}$.
\item If $\textrm{w}(U_{1})x$ is a nonadmissible column word such that each strict factor of it is an admissible column word. Let $\widetilde{\textrm{w}(U_{1})x} = y_{1}\ldots y_{s}$ be the admissible column word obtained from $\textrm{w}(U_{1})x$ by applying relation of type $(R_{3})$ and  $\widehat{T}=U_{2}\ldots U_{r}$ be the tableau obtained from $T$ after eliminating the leftmost column $U_{1}$. Then $x\rightarrow T$ is obtained by inserting successively the elements of  $\widetilde{\textrm{w}(U_{1})x}$ in the tableau $\widehat{T}$. That is,
\[
x\rightarrow T = y_{s}\rightarrow(y_{s-1}\rightarrow(\cdot\cdot\cdot y_{1}\rightarrow \widehat{T})).
\]
Moreover, the insertion of $y_{1},\ldots, y_{s}$ in $\widehat{T}$ does not cause a new contraction.
\item If $\mathrm{w}(U_{1})x$ is not a column word, then 
\[ x\rightarrow U_{1}=
\begin{tabular}
[c]{|l|l}\hline
$\mathtt{U_{1}'}$& \multicolumn{1}{|l|}{$\mathtt{y}$}\\\hline
\end{tabular},\]
where $U_{1}'$ is an admissible column of height $h(U_{1})$ and $y$ a letter. Then 
\[x\rightarrow T = U_{1}'(y\rightarrow U_{2}\ldots U_{r}),\]
that is, $x\rightarrow T$ is the juxtaposition of $U_{1}'$ with the tableau obtained by inserting $y$ in the tableau $U_{2}\ldots U_{r}.$
\end{itemize}

\begin{example}
Consider a symplectic tableau 
\[T_{1} =
\begin{tabular}
[c]{|l|l|l}\hline
$\mathtt{1}$ & $\mathtt{2}$&\multicolumn{1}{|l|}{$\mathtt{3}$}\\\hline
$\mathtt{2}$ & $\mathtt{\overline{3}}$\\\cline{1-2}
$\mathtt{3}$ & $\mathtt{\overline{2}}$\\\cline{1-2}
\end{tabular}
\]
and a letter $x = 1$. Let us compute $x\rightarrow T_{1}$. First, we begin inserting $x$ in the leftmost column $U_{1}$ of $T_{1}$. The word $1231$ is not a column word, then by applying at each step $(R_{1})$ or $(R_{2})$, we obtain :
\[1\mathbf{231}\equiv \mathbf{121}3\equiv 1123,\]
so
\[\mathtt{1}\rightarrow
\begin{tabular}
[c]{|l|}\hline
$\mathtt{1}$\\\hline
$\mathtt{2}$\\\hline
$\mathtt{3}$\\\hline
\end{tabular}
=
\begin{tabular}
[c]{|l|l}\hline
$\mathtt{1}$ & \multicolumn{1}{|l|}{$\mathtt{1}$}\\\hline
$\mathtt{2}$\\\cline{1-1}
$\mathtt{3}$\\\cline{1-1}
\end{tabular}
.\]
Then $1\rightarrow T_{1} = U_{1}'(1\rightarrow T_{1}'),$ where 
\[
U_{1}'=
\begin{tabular}
[c]{|l|}\hline
$\mathtt{1}$\\\hline
$\mathtt{2}$\\\hline
$\mathtt{3}$\\\hline
\end{tabular}
\text{ }\text{ and }\text{ }
T_{1}' =
\begin{tabular}
[c]{|l|l}\hline
$\mathtt{2}$ & \multicolumn{1}{|l|}{$\mathtt{3}$}\\\hline
$\mathtt{\overline{3}}$\\\cline{1-1}
$\mathtt{\overline{2}}$\\\cline{1-1}
\end{tabular}
.\]
Similarly, we have $2\mathbf{\overline{3}\,\overline{2}1}\equiv \mathbf{2\overline{3}1}\overline{2}\equiv 21\overline{3}\,\overline{2},$ then 
\[\mathtt{1}\rightarrow
\begin{tabular}
[c]{|l|}\hline
$\mathtt{2}$\\\hline
$\mathtt{\overline{3}}$\\\hline
$\mathtt{\overline{2}}$\\\hline
\end{tabular}
=
\begin{tabular}
[c]{|l|l}\hline
$\mathtt{1}$ & \multicolumn{1}{|l|}{$\mathtt{2}$}\\\hline
$\mathtt{\overline{3}}$\\\cline{1-1}
$\mathtt{\overline{2}}$\\\cline{1-1}
\end{tabular}
.\]
So $1\rightarrow T_{1} = U_{1}'U_{2}'\big(2\rightarrow 
\begin{tabular}
[c]{|l|}\hline
$\mathtt{3}$\\\hline
\end{tabular}\big)$, where
\[
U_{2}' = 
\begin{tabular}
[c]{|l|}\hline
$\mathtt{1}$\\\hline
$\mathtt{\overline{3}}$\\\hline
$\mathtt{\overline{2}}$\\\hline
\end{tabular}
.\]
Finally, we have $32\equiv 32$, then
\[ \mathtt{2}\rightarrow 
\begin{tabular}
[c]{|l|}\hline
$\mathtt{3}$\\\hline
\end{tabular}
=
\begin{tabular}
[c]{|l|l|}\hline
$\mathtt{2}$ &$\mathtt{3}$\\\hline
\end{tabular}
.\]
Hence,
\[
\mathtt{1}\rightarrow T_{1} =
\begin{tabular}
[c]{|l|l|ll}\hline
$\mathtt{1}$ & $\mathtt{1}$ &\multicolumn{1}{|l|}{$\mathtt{2}$} &\multicolumn{1}{|l|}{$\mathtt{3}$}\\\hline
$\mathtt{2}$ & $\mathtt{\overline{3}}$ \\\cline{1-2}
$\mathtt{3}$ & $\mathtt{\overline{2}}$\\\cline{1-2}
\end{tabular}
.\]
\end{example}

\begin{example}
Consider a symplectic tableau
\[ T_{2} =
\begin{tabular}
[c]{|l|l|ll}\hline
$\mathtt{1}$ & $\mathtt{1}$ &\multicolumn{1}{|l|}{$\mathtt{2}$} &\multicolumn{1}{|l|}{$\mathtt{3}$}\\\hline
$\mathtt{2}$ & $\mathtt{3}$ &\multicolumn{1}{|l|}{$\mathtt{3}$}\\\cline{1-3}
$\mathtt{3}$ & $\mathtt{\overline{3}}$\\\cline{1-2}
\end{tabular}
\]
and a letter $x= \overline{3}$. Let us compute $x\rightarrow T_{2}$. First, we begin inserting $x=\overline{3}$ in the leftmost column $U_{1}$ of $T_{2}$. The word $123\overline{3}$ is a nonadmissible column word, that each strict factor is an admissible column word, we have by applying $(R_{3}),$
\[ 123\overline{3}\equiv 12,\]
then
\[\widetilde{U_{1}}=
\begin{tabular}
[c]{|l|}\hline
$\mathtt{1}$\\\hline
$\mathtt{2}$\\\hline
\end{tabular}
\text{ }\text{ and }\text{ }
\widehat{T_{2}}=
\begin{tabular}
[c]{|l|ll}\hline
$\mathtt{1}$ & \multicolumn{1}{|l|}{$\mathtt{2}$} & \multicolumn{1}{|l|}{$\mathtt{3}$}\\\hline
$\mathtt{3}$ & \multicolumn{1}{|l|}{$\mathtt{3}$} \\\cline{1-2}
$\mathtt{\overline{3}}$\\\cline{1-1}
\end{tabular}
.\]
So we have to insert the elements of the column $\widetilde{U_{1}}$ in the tableau $\widehat{T_{2}}$.
 
\noindent First, one inserts 1 :
\[1\mathbf{3\overline{3}1}\equiv \mathbf{131}\overline{3}\equiv 113\overline{3},\]
then 
\[\mathtt{1}\rightarrow
\begin{tabular}
[c]{|l|}\hline
$\mathtt{1}$\\\hline
$\mathtt{3}$\\\hline
$\mathtt{\overline{3}}$\\\hline
\end{tabular}
=
\begin{tabular}
[c]{|l|l}\hline
$\mathtt{1}$ & \multicolumn{1}{|l|}{$\mathtt{1}$}\\\hline
$\mathtt{3}$\\\cline{1-1}
$\mathtt{\overline{3}}$\\\cline{1-1}
\end{tabular}
.\]
We have  $231\equiv 213,$ then 
\[\mathtt{1}\rightarrow
\begin{tabular}
[c]{|l|}\hline
$\mathtt{2}$\\\hline
$\mathtt{3}$\\\hline
\end{tabular}
=
\begin{tabular}
[c]{|l|l}\hline
$\mathtt{1}$ & \multicolumn{1}{|l|}{$\mathtt{2}$}\\\hline
$\mathtt{3}$\\\cline{1-1}
\end{tabular}
.\]
And 
\[\mathtt{2}\rightarrow
\begin{tabular}
[c]{|l|}\hline
$\mathtt{3}$\\\hline
\end{tabular}
=
\begin{tabular}
[c]{|l|l}\hline
$\mathtt{2}$ & \multicolumn{1}{|l|}{$\mathtt{3}$}\\\hline
\end{tabular}
.\]
Hence
\[\mathtt{1}\rightarrow \widehat{T_{2}} =
\begin{tabular}
[c]{|l|lll}\hline
$\mathtt{1}$ & \multicolumn{1}{|l|}{$\mathtt{1}$} & \multicolumn{1}{|l|}{$\mathtt{2}$} & \multicolumn{1}{|l|}{$\mathtt{3}$}\\\hline
$\mathtt{3}$ & \multicolumn{1}{|l|}{$\mathtt{3}$}\\\cline{1-2}
$\mathtt{\overline{3}}$\\\cline{1-1}
\end{tabular}
=\widehat{T_{2}}'
.\]
\noindent Secondly, one inserts $2$ in the tableau $\widehat{T_{2}}'$:

\noindent we have $1\mathbf{3\overline{3}2}\equiv \mathbf{132}\overline{3}\equiv 312\overline{3},$ then 
\[\mathtt{2}\rightarrow
\begin{tabular}
[c]{|l|}\hline
$\mathtt{1}$\\\hline
$\mathtt{3}$\\\hline
$\mathtt{\overline{3}}$\\\hline
\end{tabular}
=
\begin{tabular}
[c]{|l|l}\hline
$\mathtt{1}$ & \multicolumn{1}{|l|}{$\mathtt{3}$}\\\hline
$\mathtt{2}$\\\cline{1-1}
$\mathtt{\overline{3}}$\\\cline{1-1}
\end{tabular}
.\]
We have $133\equiv 313,$ then 
\[\mathtt{3}\rightarrow
\begin{tabular}
[c]{|l|}\hline
$\mathtt{1}$\\\hline
$\mathtt{3}$\\\hline
\end{tabular}
=
\begin{tabular}
[c]{|l|l}\hline
$\mathtt{1}$ & \multicolumn{1}{|l|}{$\mathtt{3}$}\\\hline
$\mathtt{3}$\\\cline{1-1}
\end{tabular}
.\]
We have $23\equiv 23,$ then 
\[\mathtt{3}\rightarrow
\begin{tabular}
[c]{|l|}\hline
$\mathtt{2}$\\\hline
\end{tabular}
=
\begin{tabular}
[c]{|l|}\hline
$\mathtt{2}$\\\hline
$\mathtt{3}$\\\hline
\end{tabular}
.\]
Hence,
\[
\mathtt{2}\rightarrow \widehat{T_{2}}'=
\begin{tabular}
[c]{|l|lll}\hline
$\mathtt{1}$ & \multicolumn{1}{|l|}{$\mathtt{1}$} & \multicolumn{1}{|l|}{$\mathtt{2}$} & \multicolumn{1}{|l|}{$\mathtt{3}$}\\\hline
$\mathtt{2}$ & \multicolumn{1}{|l|}{$\mathtt{3}$} & \multicolumn{1}{|l|}{$\mathtt{3}$}\\\cline{1-3}
$\mathtt{\overline{3}}$\\\cline{1-1}
\end{tabular}
=\mathtt{\overline{3}}\rightarrow T_{2}
.\]
\end{example}

\begin{remark}
Consider a word $w$ in $\mathcal{C}_{n}^{\ast}.$ The symplectic tableau $P(w)$ can be computed by starting with the empty word, which is a valid tableau, and iteratively applying the insertion schemes described above. Notice that when $w$ is the reading of a symplectic tableau $T$, we have $P(w) = T$.
\medskip

Let $u$ and $v$ be the  readings of two admissible columns $U$ and $V$ respectively. As we have seen in Subsection~\ref{symplectic tableaux}, $U\succeq V$ means that the column $U$ can appear to the right of $V$ in a symplectic tableau. Note that   $U\nsucceq V$ means that the word $uv$ is not the reading of a symplectic tableau.
\end{remark}

\begin{lemma}
\label{First Lemma}
Let $u$ and $v$ be the  readings of two admissible columns  $U$ and $V$ respectively. The symplectic tableau $P(uv)$ consists of at most two columns.
\end{lemma}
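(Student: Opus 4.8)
Since $U$ and $V$ are admissible columns, $u=\textrm{w}(U)$ and $v=\textrm{w}(V)$ are readings of the one-column symplectic tableaux $U$ and $V$, hence $P(u)=U$ and $P(v)=V$. The plan is to compute $P(uv)$ by feeding the letters of $v$ into $U$ one after the other with Lecouvey's insertion scheme recalled above. Writing $v=v_{1}v_{2}\cdots v_{p}$, which is strictly increasing since it is a column word, we have $P(uv)=v_{p}\rightarrow\bigl(v_{p-1}\rightarrow(\cdots(v_{1}\rightarrow U))\bigr)$. So it suffices to prove, by induction on $k$, that the symplectic tableau obtained after inserting $v_{1}\cdots v_{k}$ into $U$ has at most two columns; the base case $k=0$ is immediate, $U$ being a single column. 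Note also that if $U\succeq V$ then $uv$ is already the reading of the two-column symplectic tableau $VU$, so $P(uv)=VU$ has exactly two columns; the hypothesis $U\nsucceq V$ only isolates the non-trivial situation.

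\textbf{The induction step.}
Let $T=C_{1}$ or $T=C_{1}C_{2}$ be the current tableau and $x=v_{k}$ the letter to insert. I would run through the three cases of the insertion of $x$ in a symplectic tableau. If $\textrm{w}(C_{1})x$ is an admissible column word, $x$ is appended at the bottom of $C_{1}$ and the number of columns is unchanged. If $\textrm{w}(C_{1})x$ is a non-admissible column word all of whose strict factors are admissible, then $C_{1}$ is replaced by a strictly shorter column and the letters of the resulting column word are reinserted into what is left of $T$ — at most one column, by the induction hypothesis — and, by the Remark preceding the lemma, this reinsertion triggers no new contraction. Finally, if $\textrm{w}(C_{1})x$ is not a column word, $C_{1}$ is replaced by a column $C_{1}'$ of the same height together with a single displaced letter $y$, which is then inserted into the second column of $T$ (empty if $T=C_{1}$). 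In every case, a third column can only be created if some letter is in turn displaced, or contracted, out of the \emph{second} column; so the whole assertion reduces to showing that the second column never overflows.

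\textbf{The main obstacle.}
Controlling this cascade into the second column is the heart of the proof. The key is a monotonicity property of insertions fed by a strictly increasing sequence: once an appending or a contracting step has occurred on the first column, no later $v_{i}$ can produce a non-column-word step there (its value then exceeds the new bottom entry of the column), so all non-column-word steps on the first column occur first, and, by the argument familiar from type A, the successively displaced letters form a strictly increasing sequence; therefore each of them strictly exceeds the current bottom entry of the second column and is simply appended to it. For the contracting steps one must argue in addition that the column words extracted by the relations $(R_{2})$ and $(R_{3})$ enter the second column without displacing anything from it, and here the type C machinery has to be used: one works with the split columns $lC_{1},rC_{1},lC_{2},rC_{2}$ and with the inequalities $rC_{1}\leq lC_{2}$ and $h(C_{1})\geq h(C_{2})$ that hold along the computation, which determine where the extracted letters land. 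I expect the verification of this last point — contractions not cascading past the second column — to be the delicate and computation-heavy part of the argument.

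\textbf{A cleaner alternative.}
A more conceptual route, if one is willing to invoke the compatibility of Lecouvey's insertion with column splitting, is the following: splitting the admissible columns $U$ and $V$ turns $uv$ into a concatenation of at most four column words over $\mathcal{C}_{n}$, whose type A column-insertion tableau has at most four columns (its number of columns being the length of a longest strictly decreasing subsequence); unsplitting then produces a symplectic tableau with at most two columns, which one identifies with $P(uv)$.
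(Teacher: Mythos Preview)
Your approach is genuinely different from the paper's. The paper neither inducts on the letters of $v$ nor tracks bumped letters; it works in the crystal graph $G_n$. At each step it passes to the highest-weight vertex of the connected component containing the current word: by Lemma~\ref{highestweight} that vertex factors as $u^{0}y_j^{0}$ with $y_j^{0}$ drawn from a short explicit list, and the weight $\mathrm{wt}(y_j^{0})=\pm(\Lambda_i-\Lambda_{i-1})$ says directly that one column of height $i{-}1$ becomes one of height $i$ (or conversely). Since shapes are constant along a crystal component, this pins down the shape of $P(uv)$ without manipulating $R_1,R_2,R_3$ by hand, and the three insertion cases are absorbed into a uniform weight computation.

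Your sketch has a real gap exactly where you flag it. The monotonicity claim---that after an appending or contracting step on $C_1$ no later $v_i$ can trigger case~3---is correct after \emph{appending} (the new bottom of $C_1$ is $v_k<v_{k+1}$), but not after a \emph{contraction} on a two-column $T=C_1C_2$: there the new first column is the first column of the tableau obtained by inserting $\widetilde{\mathrm{w}(C_1)v_k}$ into $C_2$, and you give no reason why its bottom entry lies below $v_{k+1}$, nor why that reinsertion itself stays within two columns. ``No new contraction'' only forbids case~2 from recurring in the reinsertion; it does not bound the number of columns produced by the case-3 steps that follow. You also assert that the letters bumped out of $C_1$ in successive case-3 steps are strictly increasing ``by the argument familiar from type~A'', but in type~C the bump is governed by $R_1$ \emph{and} $R_2$, and $R_2$ alters the bumped letter in a way with no type-A analogue, so this monotonicity needs its own argument. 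As written, your proof is complete only in the contraction-free situation, which is essentially the type-A statement.

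The splitting alternative is a legitimate route, but the compatibility of Lecouvey's insertion with the splitting map is itself one of the substantial technical results of~\cite{Lecouvey02}, so you are invoking heavier machinery than the lemma warrants; you would also need to be precise about which reading convention on $\mathcal{C}_n$ makes the column count of the split tableau equal a longest-subsequence length.
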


\begin{proof}
For $U \succeq V$, the result is trivial. Let $u = x_{1}\ldots x_{p}$ and $v  = y_{1}\ldots y_{q}$ be respectively the  readings of  two  admissible columns  $U$ and $V$ of height $p$ and $q$, such that $U\nsucceq V$. Let $u^{0}z_{1}\ldots z_{q}$ be the highest weight vertex of the connected component containing $uv$. We begin inserting the first element $y_{1}$ of $v$ in the column $U$. The shape of $P(uy_{1})$ depends of the connected component  containing $uy_{1}$. The highest weight vertex of this component is $u^{0}z_{1}$. By Lemma~\ref{highestweight}, $u^{0}$ is of highest weight and $\varepsilon_{i}(z_{1})\leq \varphi_{i}(u^{0})$, for any $i=1,\ldots ,n$. Then we obtain the following cases.
\medskip

\noindent {\bf Case~1:}  $u^{0}z_{1} = 1\ldots p(p+1)$.  In this case, $uy_{1}$ is an admissible column word, $z_{1}=p+1$ and $\mathrm{wt}(z_{1})$ =$ \Lambda_{p+1} - \Lambda_{p}$. Then during the insertion of the letter $y_{1}$ in the column $U$, this column of height $p$ corresponding to the weight $\Lambda_{p}$ is transformed into a column  of height $p+1$ corresponding to the weight $\Lambda_{p+1}$. Its reading is $uy_{1}$.  After one continues inserting the others elements $y_{2},\ldots, y_{q}$ of the column word $v$. We know by the definition of an admissible column that every element of this column is strictly larger than its preceding, then we have two cases:

First, suppose that $z_{i} = p+i$, for $i=2,\ldots, q$. Then $\mathrm{wt}(z_{i}) = \Lambda_{p+i} - \Lambda_{p+i-1}$ and during the insertion of $y_{i}$ in the column of reading  $uy_{1}\ldots y_{i-1}$, this column of height $p+i-1$ is turned into the column of reading $uy_{1}\ldots y_{i}$  and of height $p+i$. Thus $uv$ is an admissible column word and $P(uv)$ consists of one column $uv$. 

Second, suppose that there exists an element $y_{k}$ of the column word $v$ such that $uy_{1}\ldots y_{k-1}y_{k}$ is a nonadmissible column word whose each strict factor is an admissible column word, then $z_{k} = \overline{p+k-1}$ and $\mathrm{wt}(z_{k}) = \Lambda_{p+k-2}-\Lambda_{p+k-1}$, then during the insertion of $y_{k}$ in the admissible column of reading $uy_{1}\ldots y_{k-1}$, this column of height $p+k-1$ is transformed into a column of height $p+k-2$. After one continues inserting the remaining elements of $v$, then one adds those letters  in distinct rows in  the considered column or one removes some letters  from distinct rows of the same column.

\noindent Hence, in this case $P(uv)$ consists of one column.
\medskip

\noindent {\bf Case~2:}  $u^{0}z_{1} = 1\ldots p\overline{p}$.  In this case, $uy_{1}$ is a nonadmissible column word such that each strict factor is an admissible column word. We have $\mathrm{wt}(z_{1}) = \Lambda_{p-1}-\Lambda_{p}$, then during the insertion  of $y_{1}$ in the admissible column $U$, this column of height $p$ is turned into a column of height $p-1$. Since the elements of  the column $V$ are strictly increasing, one can prove by similar arguments of  Case~1, that during the computation of $P((uy_{1})y_{2}\ldots y_{q})$, one adds a number of boxes of the considered column in distinct rows  and  one removes some boxes from distinct rows of the same column. Note also that the column $U$ can be contracted to become empty.
Hence, we have in this case that $P(uv)$ consists of one column or zero columns.
\medskip

\noindent{\bf Case~3:}  $u^{0}z_{1} = 1\ldots p1$.  In this case, $uy_{1}$ is not a column word, then during the insertion of $y_{1}$ in the admissible column $U$, an element appears in a second column. After,  one inserts the next element $y_{2}$ of the column $V$ in $P(uy_{1})$, the highest weight of the connected component containing $\mathrm{w}(P(uy_{1}))y_{2}$ may be written $\mathrm{w}(P(uy_{1})^{0})z_{2}$, where  $\mathrm{w}(P(uy_{1})^{0})$ is of highest weight and by Lemma~\ref{highestweight}, we have:

\noindent {(i)} $z_{2} = i$ (with $i = p+1$ or $i=2$), then its weight is equal to $\Lambda_{i} - \Lambda_{i-1}$, then during the insertion $y_{2}\rightarrow P(uy_{1})$ a column of height $i-1$ is turned into a column of height $i$. Then one adds a box in the left column or in the right column of $P(uy_{1})$.
\medskip

\noindent{(ii)} $z_{2} = \overline{p}$, then its weight is equal to $\Lambda_{p-1} - \Lambda_{p}$, then during the insertion $y_{2}\rightarrow P(uy_{1})$, the right column of height $p$ is turned into a column of height $p-1$.
\medskip

After we continue inserting the remaining letters of $v$, and since every element is strictly larger than its preceding, one adds boxes in distinct rows in the right or in the left column and similarly one removes boxes from distinct rows of the considered symplectic tableau. Note also that it is impossible that one of the columns contracts to become empty. Indeed, let $u$ and $v$ be respectively the readings of  two admissible columns $U$ and $V$ such that $uv$ is of highest weight. Suppose that after adding $k$ boxes in the right column, one inserts $p$ boxes in the left column to contract it into an empty one. Then in this case we have $u=1\ldots p$ and $v=1\ldots k\overline{p}(\overline{p-1})\ldots\overline{1}$. We have in the word $v$ that    $N(p)=p+k>p$. So the column $V$ is not admissible, which yields a contradiction.

Hence, $P(uv)$ consists of two columns.
\end{proof}

\begin{lemma}
\label{Second Lemma}
Let $u$ and $v$ be the readings of two admissible columns $U$ and $V$ respectively, such that $U \not\succeq V$. Suppose that $P(uv)$ has two columns and let $W$ be the rightmost column. Then the column $U$ contains more elements than $W$.
\end{lemma}

\begin{proof}
Let $u = x_{1}\ldots x_{p}$ and $v  = y_{1}\ldots y_{q}$ be respectively the  readings of two  admissible columns  $U$ and $V$ of height $p$ and $q$,  such that $U\nsucceq V$. Let $w$ and $w'$ be respectively the  readings of the right and left column $W$ and $W'$ of $P(uv)$. If the height of $U$ is greater than the height of $V$, then in all cases we have $h(W)<p$.

Suppose that $q\geq p$ and the columns $U$ and $V$ contain only unbarred letters. Suppose that during the computation of $P(uv)$, we only add boxes by applying relations of type $(R_{1})$. In other words, we compute $P(uv)$ by Schensted's insertion. If $h(W)=p$, then during inserting the first $p$ elements of $V$, $p$ boxes are added in the second column and they are all filled by elements of $U$. Since the number of added boxes is equal to the height of $U$, $\textrm{w}(P(uv)) = uv$. Then $U\succeq V$ which yields a contradiction. Hence, $h(W)<p$.

Suppose now that during the computation of $P(uv)$, we only add boxes by applying relations of type $(R_{1})$ or $(R_{2})$. By definition of $P(uv)$  we have $\textrm{w}(P(uv))= ww'\equiv uv$. Then the words $uv$ and $ww'$ occur at the same place in their isomorphic connected components $B(uv)$ and $B(ww')$ of the crystal  $G_{n}$. Note that all the vertices in a connected component are the readings of tableaux of  the  same shape. Let $(uv)^{0}$ and $(ww')^{0}$ be respectively the highest weight vertices of $B(uv)$ and $B(ww')$. By Remark~\ref{Highestweightremark}, the word $(ww')^{0}$ is the reading of a tableau that all its elements are unbarred letters, then  $(uv)^{0}$ and $(ww')^{0}$ are related by relations of type $(R_{1})$. Hence, as we have seen above,  the height of the  second column of $P((uv)^{0})$ is strictly less than $p$. Since $(ww')^{0}$ and $ww'$  are the readings of  two symplectic tableaux of the same shape, the length of $w$ is stricly less than $p$.

Suppose that during the insertion of the first $k$ elements of $v$, for $k\leq p-1$,  into the column  $U$, we add $k$ boxes in a second column. Then 
\[P(uy_{1}\ldots y_{k})= 
\begin{tabular}
[c]{|l|l}\hline
$\mathtt{U_{1}}$ &   \multicolumn{1}{|l|}{$\mathtt{U_{2}}$} \\\hline
\end{tabular},\]
where $U_{1}$ contains $p$ elements and $U_{2}$ contains the $k$ added boxes. After we insert $y_{k+1}$ in the column $U_{1}$. Suppose that $\textrm{w}(U_{1})y_{k+1}$ is a nonadmissible column  word such that all of its proper factors are admissible. Let $\widetilde{\textrm{w}(U_{1})y_{k+1}}$  be the column word obtained  from $\textrm{w}(U_{1})y_{k+1}$ after applying relation of type $(R_{3})$. Then we insert the elements of $\widetilde{\textrm{w}(U_{1})y_{k+1}}$ in the column $U_{2}$. This insertion does not cause a new contraction. Then if we obtained two columns, the height  of the right one is strictly less than the height of $U_{2}$ which is strictly less than $p$. After we continue inserting the remaining elements of $v$, and the height of the right column of the final tableau  is strictly less than $p$.  
\end{proof}

%%%%%%%%%%%%%%%%%%%%%%%%%%%%%%%%%%%%%%%%%%%%%%%%%%%%%%%%%%%%%%%%%%%%%%%%%
%%%%%%%%%%%%%%%%%%%%%%%%%%%%%%%%%%%%%%%%%%%%%%%%%%%%%%%%%%%%%%%%%%%%%%%%%
\section{Convergent presentation of plactic monoid for type C}
\label{Convergent presentation}
%%%%%%%%%%%%%%%%%%%%%%%%%%%%%%%%%%%%%%%%%%%%%%%%%%%%%%%%%%%%%%%%%%%%%%%%%
%%%%%%%%%%%%%%%%%%%%%%%%%%%%%%%%%%%%%%%%%%%%%%%%%%%%%%%%%%%%%%%%%%%%%%%%%

%%%%%%%%%%%%%%%%%%%%%%%%%%%%%%%%%%%%%%%%%
\subsection{Knuth-like presentation}
%%%%%%%%%%%%%%%%%%%%%%%%%%%%%%%%%%%%%%%%%

Consider a presentation of the plactic monoid  $\mathbf{P_{n}}(C)$, by the $2$-polygraph $\Sigma^{\text{Sp(n)}}$, whose  set of $1$-cells is $\mathcal{C}_{n}$ and  whose $2$-cells correspond to the relations $(R_{1})$, $(R_{2})$ and $(R_{3})$ oriented with respect to the reverse deglex order, that is
\[
\begin{array}{rl}
\Sigma^{\text{Sp(n)}}_2 &\:=\:\big\{\;  xzy\odfl{\kappa_{x,y,z}} zxy \;\big|\; x< y \leq z \text{ } \text{ and } \text{ } z\neq \overline{x} \;\big\} \\
                          &\;\cup\;\big\{\; yxz \odfl{\kappa_{x,y,z}'}yzx  \;\big|\; x\leq y< z \text{ }  \text{ and } \text{ }   z\neq \overline{x}  \;\big\}\\
        &\;\cup\;\big\{\;  yx\overline{x}\odfl{\xi_{x,y,\overline{x}}}y\overline{(x-1)}(x-1)  \;\big|\;  x\leq y\leq\overline{x}\text{ }\text{ and }\text{ } 1<x\leq n\;\big\}\\
        &\;\cup\;\big\{\;  x\overline{x}y \odfl{\xi_{x,y,\overline{x}}'}\overline{(x-1)}(x-1)y \;\big|\;  x\leq y\leq\overline{x}\text{ }\text{ and }\text{ } 1<x\leq n\;\big\}\\
        &\;\cup\;\big\{\; w \odfl{\zeta_{w}}\widetilde{w} \;\big|\; \text{w and } \widetilde{w} \text{ satisfy the conditions of the relation } (R_{3})  \;\big\}.
\end{array}
\]
The order being monomial, the $2$-polygraph $\Sigma^{\text{Sp(n)}}$ is terminating.

\begin{remark}
For $n\geq 4$, the Knuth presentation of the plactic monoid for type A doesn't admit a finite completion compatible with the reverse deglex order. Indeed,  by similar arguments used in~\cite{KubatOkninski14},  one can show that during the completion one adds an infinity of $2$-cells of the form $232^{i}124\odfl{} 2342^{i}12$, for $i>1$. The $2$-polygraph $\Sigma^{\text{Sp(n)}}$  contains the Knuth relations for type A and  we can not apply relations  of type $(R_{2})$ and $(R_{3})$ on the words $232^{i}124$ and $2342^{i}12$, for $i>1$, then the $2$-polygraph $\Sigma^{\text{Sp(n)}}$ does not also admit a finite completion compatible with the reverse deglex order.
\end{remark}

%%%%%%%%%%%%%%%%%%%%%%%%%%%%%%%%%%%
\subsection{Column presentation}
%%%%%%%%%%%%%%%%%%%%%%%%%%%%%%%%%%%

In order to give a finite convergent presentation of the plactic monoid $\mathbf{P_{n}}(C)$, one introduces the admissible column generators. The set of generators is 
\[
\Gamma_{1}=\big\{\; c_{u} \; \big| \; \text{ $u$ is a nonempty admissible column word  of } \mathcal{C}_{n}^{*}  \; \big\},
\]
where each symbol $c_{u}$ represents the element $u$ of $\mathbf{P_{n}}(C)$.
In particular,  the word $c_{x}$ represents the letter $x$ in $\mathcal{C}_{n}$, hence the set $\Gamma_{1}$ also generates $\mathbf{P_{n}}(C)$. 

Let $w=x_{1}\ldots x_{l(w)}$ and $\widetilde{w}= \widetilde{x}_{1}\ldots \widetilde{x}_{l(\widetilde{w})}$ be  two columns such that $w\equiv \widetilde{w}$ by a relation of type $(R_{3})$. 

We consider the two following sets of $2$-cells, the $2$-cells corresponding to the  relations $(R_{1}), (R_{2})$ and $(R_{3})$, that is,  
\[
\begin{array}{rl}
\Gamma^{\text{Sp(n)}}_2 &\:=\:\big\{\; c_xc_zc_y\odfl{c_{\kappa_{x,y,z}}} c_zc_xc_y  \;\big|\; x< y \leq z \text{ } \text{ and } \text{ } z\neq \overline{x} \;\big\} \\
                          &\;\cup\;\big\{\; c_yc_xc_z\odfl{c_{\kappa_{x,y,z}'}} c_yc_zc_x  \;\big|\; x\leq y< z \text{ }  \text{ and } \text{ }   z\neq \overline{x}  \;\big\}\\
        &\;\cup\;\big\{\;  c_yc_xc_{\overline{x}}\odfl{c_{\xi_{x,y,\overline{x}}}}c_yc_{\overline{(x-1)}}c_{(x-1)}  \;\big|\;  x\leq y\leq\overline{x}\text{ }\text{ and }\text{ } 1<x\leq n\;\big\}\\
        &\;\cup\;\big\{\;   c_{x}c_{\overline{x}}c_{y}\odfl{c_{\xi_{x,y,\overline{x}}'}} c_{\overline{(x-1)}}c_{(x-1)}c_{y}\;\big|\;  x\leq y\leq\overline{x}\text{ }\text{ and }\text{ } 1<x\leq n\;\big\}\\
        &\;\cup\;\big\{\; c_{x_{1}}\ldots c_{x_{l(w)}} \odfl{c_{\zeta_{w}}}c_{\widetilde{x}_{1}}\ldots  c_{\widetilde{x}_{l(\widetilde{w})}} \;\big|\;    w \text{ and } \widetilde{w}  \text{ verify the relation}~(R_{3})  \;\big\},
\end{array}
\]
and the $2$-cells corresponding to the defining relations for the extra column generators $c_{u}$, where $u$ is a nonempty admissible column word  of $\mathcal{C}_{n}^{*}$  with $l(u)~\geq~2$,
\[
\Gamma^{\text{c}(n)}_2
\: = \:
\big\{ \;c_{y_{1}}\ldots c_{y_{k}}\odfl{\gamma_{y_1,\ldots,y_k}} c_{y_{1}\ldots y_{k}} \; \big| \; y_{1}\ldots y_{k} \text{ is a nonempty admissible column} \; \big\}.
\]
The monoid $\mathbf{P_{n}}(C)$ is  presented by the $2$-polygraph $\Gamma^{(n)} = (\Gamma_{1}, \Gamma_2^{(n)})$, with $\Gamma_2^{(n)}~=~\Gamma^{\text{Sp}(n)}_2~\cup~\Gamma^{\text{c}(n)}_2$.
 \vspace*{1cm}

Let $u$ and $v$ be respectively the readings of two nonempty admissible columns $U$ and $V$. Suppose that $U\nsucceq V$, by Lemma~\ref{First Lemma} the symplectic tableau $P(uv)$ consists of at most two columns. Define a $2$-cell
\begin{itemize}
\item $c_{u}c_{v}\odfl{\alpha_{u,v}} c_{w}c_{w'}$, where  the words $w$ and $w'$ are respectively the  readings of the right and left columns $W$ and $W'$ of $P(uv)$ if this symplectic tableau consists of two columns.
\item $c_{u}c_{v}\odfl{\alpha_{u,v}} c_{w}$, where $w$ is the reading of the column $W$ of $P(uv)$ if it consists of one column.
\item $c_{u}c_{v}\odfl{\alpha_{u,v}}c_{\epsilon}$, where $\epsilon$ is the empty word if $P(uv)$ consists of zero columns.
\end{itemize}

Define
\[
\Omega_2
\: = \:
\big\{ \;c_{u}c_{v}\odfl{\alpha_{u,v}} c_{w}c_{w'} \; \big| \; u \text{ and }v \text{ are nonempty admissible columns words of } \mathcal{C}_{n}^{*} \text{ such that } U\nsucceq V \big\}.
\]
The $2$-polygraph $\Sigma^{\text{acol(n)}} = \left(\Gamma_{1}, \Omega_{2}\right)$ is called the \emph{column presentation}.

\begin{remark}
Every rule in $\Omega_{2}$ holds in the symplectic plactic monoid $\mathbf{P_{n}}(C)$, indeed,
$$c_{u}c_{v}\equiv uv\equiv \textrm{w}(P(uv))= ww'\equiv c_{w}c_{w'}.$$ 
\end{remark}

\noindent Let $<$ be the total order on $\mathcal{C}_{n}$ defined by $1<2<\ldots <n<\overline{n}<\ldots<\overline{1}$. Denote by $<_{\text{deg}}$ the deglex order induced by $<$ on the monoid $\mathcal{C}_{n}^{\ast}$. Let us define an order on $\Gamma_{1}^{*}$.  First, let $\sqsubset$ be the total order  on $\Gamma_{1}$ defined  by $$c_{u}~\sqsubset~c_{v} \text{ if }\quad  l(u)~<~l(v)\quad \text{or}\quad [l(u)=l(v)\text{ }\text{and}\text{ } {u<_{\text{lex}} v}]. $$ 
Secondly, consider the order $\prec$ on $\Gamma_{1}^{*}$, defined as follows. We have 
\[\begin{array}{rl}
&c_{u_{1}}c_{u_{2}}\ldots c_{u_{m}} \prec c_{v_{1}}c_{v_{2}}\ldots c_{v_{n}} \text{ if } \text{  } m<n  \\
& \text{or} \text{ } \left( \text{m} = \text{n} \text{ } \text{and there exists} \text{ }  i \text{ }\text{such that }\text{ } c_{u_{i}}\sqsubset c_{v_{i}} \text{ } \text{ and } \text{ } \forall j<i, \text{ } c_{u_j} = c_{v_j}\right),
\end{array}\]
where $c_{u_{i}}$ and $c_{v_{j}}$ are elements of $\Gamma_{1}$, for $i=1,\ldots, m$ and $j=1,\ldots, n$. That is, two elements of $\Sigma_{1}^{\ast}$ are compared using  the number of theirs symbols. If they have the same number of symbols, we compare them using the total order $\sqsubset$ on the elements of $\Gamma_{1}$ which is induced by the deglex order on the columns words of $\mathcal{C}_{n}^{\ast}$. Then $\prec$ is a total order on $\Gamma_{1}^{*}$ and it is a well-ordering.

\begin{lemma}
\label{Finited lemma}
The $2$-polygraph $\Sigma^{\text{acol(n)}}$ is finite.
\end{lemma}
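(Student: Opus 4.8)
The plan is to show that both the generating set $\Gamma_1$ and the set of $2$-cells $\Omega_2$ are finite. For the generating set, the key point is that $\Gamma_1$ consists of the symbols $c_u$ indexed by admissible column words of $\mathcal{C}_n^*$. Since an admissible column has strictly increasing entries drawn from the finite alphabet $\mathcal{C}_n$ (which has $2n$ letters), its height is at most $2n$, and there are only finitely many strictly increasing sequences in a finite totally ordered set. Hence there are finitely many column words altogether, and a fortiori finitely many admissible ones, so $\Gamma_1$ is finite.

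For the $2$-cells, recall that $\Omega_2$ consists of the rules $c_u c_v \odfl{\alpha_{u,v}} c_w c_{w'}$ where $u,v$ range over admissible column words with $U \nsucceq V$. The source of each such rule is determined by the pair $(u,v)$, and there are only finitely many ordered pairs of admissible column words since $\Gamma_1$ is finite. Therefore the number of left-hand sides is finite. It remains to observe that for each admissible pair $(u,v)$ with $U \nsucceq V$, the right-hand side $c_w c_{w'}$ is well defined (i.e. a single two-letter-or-less word over $\Gamma_1$): this is exactly the content of Lemma~\ref{First Lemma}, which guarantees that $P(uv)$ consists of at most two columns $W$ and $W'$, both of which are admissible (being columns of a symplectic tableau), so $c_w$ and $c_{w'}$ are indeed letters of $\Gamma_1$ (with $w'$ the empty word, and $c_{w'}$ omitted, in the one-column case). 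Thus each rule in $\Omega_2$ is a genuine $2$-cell of the polygraph $\Sigma^{\text{acol}(n)}$, and there are finitely many of them.

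Putting the two parts together: $\Sigma^{\text{acol}(n)} = (\Gamma_1, \Omega_2)$ has both $\Gamma_1$ and $\Omega_2$ finite, hence it is a finite $2$-polygraph. I do not anticipate a serious obstacle here; the only thing to be careful about is the bookkeeping that ensures the right-hand sides live in $\Gamma_1^*$, which is precisely why Lemma~\ref{First Lemma} (the two-column bound) is invoked rather than proved inline. One might also wish to note explicitly that distinct pairs can give the same rule or that the map $(u,v) \mapsto (w,w')$ need not be injective, but this is irrelevant to finiteness — the cardinality bound $|\Omega_2| \leq |\Gamma_1|^2$ suffices.
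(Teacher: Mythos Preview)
Your proof is correct and follows essentially the same approach as the paper: show $\Gamma_1$ is finite (bounded column height over a finite alphabet) and then bound $|\Omega_2|$ by the number of pairs $(u,v)$. The paper's version is terser and uses the sharper bound that admissible columns have height at most $n$ (from the admissibility condition with $m=n$) rather than your $2n$, and it appeals to uniqueness of $P(uv)$ rather than explicitly invoking Lemma~\ref{First Lemma}, but these are cosmetic differences.
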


\begin{proof}
The set $\Gamma_{1}$ is finite thanks to the fact that the admissible columns words of $\mathcal{C}_{n}^{*}$ have  length at most $n$. Hence, the $2$-polygraph $\Sigma^{\text{acol(n)}}$ is finite.
\end{proof}

The following lemma shows that the plactic monoid $\mathbf{P_{n}}(C)$ is presented by the $2$-polygraph $\Sigma^{\text{acol(n)}}$ :

\begin{lemma}
The $2$-polygraphs $\Gamma^{(n)}$ and $\Sigma^{\text{acol(n)}}$ are Tietze equivalent.
\end{lemma}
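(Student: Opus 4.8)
The statement to prove is that the $2$-polygraphs $\Gamma^{(n)}$ and $\Sigma^{\text{acol}(n)}$ are Tietze equivalent, i.e. that they present the same monoid $\mathbf{Pl}(C)$. Since both have the same set of $1$-cells $\Gamma_{1}$ (the admissible column words), by the characterization of Tietze equivalence recalled in Subsection~2.1 it suffices to show that the two sets of $2$-cells generate the same congruence on $\Gamma_{1}^{*}$. We already know from the construction that $\Gamma^{(n)}$ presents $\mathbf{Pl}(C)$, so the goal reduces to proving that $\Omega_{2}$ generates the same congruence. The plan is to show each of $\Omega_{2}$ and $\Gamma_2^{(n)}$ is derivable from the other.

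First I would show that every $2$-cell $\alpha_{u,v}\colon c_{u}c_{v}\Rightarrow c_{w}c_{w'}$ of $\Omega_{2}$ is a consequence of the relations in $\Gamma_2^{(n)}$. For this, unfold $c_{u}$ and $c_{v}$ into their letter-by-letter column readings using the relations $\gamma$ of $\Gamma^{\text{c}(n)}_2$ (applied in reverse), so that $c_{u}c_{v}$ is congruent to $c_{x_1}\cdots c_{x_p}c_{y_1}\cdots c_{y_q}$; then observe that the bumping algorithm for type C described in Subsection~3.3 computes $P(uv)$ from $uv$ by a finite sequence of applications of relations $(R_1)$, $(R_2)$ and $(R_3)$, each of which is available in $\Gamma^{\text{Sp}(n)}_2$; this rewrites the word to $\mathrm{w}(P(uv)) = w'w$ — here Lemma~\ref{First Lemma} guarantees $P(uv)$ has at most two columns, so the result is $c_{x'_1}\cdots c_{x'_q}\cdots$ the readings of $W'$ and $W$ — and finally re-fold using the $\gamma$ relations to reach $c_{w}c_{w'}$. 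Conversely, I would derive each generating $2$-cell of $\Gamma_2^{(n)}$ from $\Omega_{2}$: the column relations $\gamma_{y_1,\ldots,y_k}$ are obtained by iterating $\alpha$ (inserting one letter at a time into a column and observing the result is the single column $c_{y_1\cdots y_k}$, which happens precisely in the one-column cases of the insertion algorithm, and every proper prefix $U_i\nsucceq$ the next letter when the whole word is an admissible column); and the relations $(R_1),(R_2),(R_3)$ themselves, read on single-letter columns, are instances of $\alpha_{u,v}$ combined with the $\gamma$'s since any such relation reshuffles a short word into its $P$-tableau.

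The main obstacle I expect is the bookkeeping in the first direction: one must be careful that the bumping algorithm really proceeds exclusively by applications of $(R_1)$, $(R_2)$, $(R_3)$ — which it does by construction in Subsection~3.3 — and that the intermediate words produced are exactly the readings of the partial tableaux, so that re-folding with $\gamma$ is legitimate at each stage. A subtle point is that when $P(uv)$ has a single column, $w'$ is empty and one must check the degenerate case of $\alpha_{u,v}\colon c_u c_v \Rightarrow c_w$ separately; this follows from the one-column branches of Lemma~\ref{First Lemma}. In the converse direction, the delicate check is that each prefix condition $U_i \nsucceq y_{i+1}$ holds when building up an admissible column letter by letter, which is immediate from the definition of admissibility together with the first bullet of the single-column case of the insertion algorithm (the highest weight vertex being $1\cdots p(p+1)$). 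Once these combinatorial verifications are in place, assembling the two derivations into a finite chain of elementary Tietze transformations is routine.
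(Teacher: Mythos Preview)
Your proposal is correct and follows essentially the same strategy as the paper: establish mutual derivability of the two sets of $2$-cells over the common generating set $\Gamma_{1}$. The only difference is one of emphasis: the paper spells out the direction $\Gamma_{2}^{(n)}\Rightarrow\Omega_{2}$ via explicit commuting diagrams (for each of $c_{\kappa}$, $c_{\kappa'}$, $c_{\xi}$, $c_{\xi'}$, $c_{\zeta}$, and $\gamma$ it exhibits a concrete zig-zag of $\alpha$'s), while handling the converse $\Omega_{2}\Rightarrow\Gamma_{2}^{(n)}$ in a single line using $uv\equiv\mathrm{w}(P(uv))$; you instead give the more detailed argument for the converse (unfold with $\gamma$, run the bumping algorithm as a sequence of $(R_1)$--$(R_3)$ steps, refold) and are sketchier on the forward direction.
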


\begin{proof}
Every relation in $\Gamma_{2}^{\text{Sp(n)}}$ can be deduced from rules in $\Omega_{2}$, indeed, the $2$-cells $c_{\kappa_{x,y,z}}$ for $x<y\leq z$ and $z\neq \overline{x}$, $c_{\kappa'_{x,y,z}}$ for $x\leq y<z$ and $z\neq \overline{x}$,  $c_{\xi_{x,y,\overline{x}}}$ and $c_{\xi'_{x,y,\overline{x}}}$ for $x\leq y\leq \overline{x}$ and $1<x\leq n$ are obtained from rules in  $\Omega_{2}$ according to the following diagrams
\[
\xymatrix @C=6em @R=4em{
{c_{z}c_{x}c_{y}}
   \ar@2 [d]_-{c_{z}\alpha_{x,y}}
& {c_{x}c_{z}c_{y}}
     \ar@2 [l] _-{c_{\kappa_{x,y,z}}}
     \ar@2 [d] ^-{\alpha_{x,z}c_{y}}
\\
{c_{z}c_{xy}}
& {c_{xz}c_{y}}
    \ar@2[l] ^-{\alpha_{xz,y}}
}
\qquad
\xymatrix @C=6em @R=4em{
{c_{y}c_{z}c_{x}}
     \ar@2 [d]_-{\alpha_{y,z}c_{x}}
& {c_{y}c_{x}c_{z}}
      \ar@2 [l] _-{c_{\kappa'_{x,y,z}}}
     \ar@2 [d] ^-{c_{y}\alpha_{x,z}}
\\
{c_{yz}c_{x}}
\ar@2[r] _-{\alpha_{yz,x}}
& {c_{y}c_{xz}}
    }    
\]
\[
\xymatrix @C=4em @R=4em{
{c_{y}c_{\overline{x-1}}c_{x-1}}
     \ar@2 [d]_-{\alpha_{y,\overline{x-1}}c_{x-1}}
& {c_{y}c_{x}c_{\overline{x}}}
      \ar@2 [l] _-{c_{\xi_{x,y,\overline{x}}}}
     \ar@2 [d] ^-{c_{y}\alpha_{x,\overline{x}}}
\\
{c_{y(\overline{x-1})}c_{x-1}}
\ar@2[r] _-{\alpha_{y(\overline{x-1}),x-1}}
& {c_{y}c_{x\overline{x}}}
}
\qquad
\xymatrix @C=3em @R=4em{
{c_{\overline{x-1}}c_{x-1}c_{y}}
     \ar@2 [d]_-{c_{\overline{x-1}}\alpha_{x-1,y}}
& {c_{x}c_{\overline{x}}c_{y}}
     \ar@2 [l] _-{c_{\xi'_{x,y,\overline{x}}}}
     \ar@2 [d] ^-{\alpha_{x,\overline{x}}c_{y}}
\\
{c_{\overline{x-1}}c_{(x-1)y}}
& {c_{x\overline{x}}c_{y}}
\ar@2[l] _-{\alpha_{x\overline{x},y}}
    }    
\]
Let $w=x_{1}\ldots x_{p}\ldots x_{q}\ldots x_{k}$ be a nonadmissible column word of length $k$ such that each strict factor of it is an admissible column word. Let $z=x_{p}$ be the lowest unbarred letter such that  $z=x_{p}$ and  $\overline{z}=x_{q}$ occur in $w$ and $N(z) = z+1$. Then the $2$-cell $c_{\zeta_{w}}$ is deduced from from rules in $\Omega_{2}$ according to the following diagram
\[
\xymatrix @C=2em @!C{
c_{x_{1}}\ldots c_{x_{p}}\ldots c_{x_{q}}\ldots c_{x_{k}} 
	\ar@2[rr] ^{c_{\zeta_{w}}}
	\ar@2[dd]|{\alpha_{x_{1},x_{2}}c_{x_{3}}\ldots c_{x_{k}}}
&& c_{x_{1}}\ldots \widehat{c_{x_{p}}} \ldots \widehat{c_{x_{q}}}\ldots c_{x_{k}} 
	\ar@2[dd]|{\alpha_{x_{1},x_{2}}c_{x_{3}}\ldots c_{x_{k}}}
\\
\\
(\ldots)
\ar@2[dd]|{\alpha_{x_{1}\ldots x_{k-2},x_{k-1}}c_{x_{k}}}
&& (\ldots )
\ar@2[dd]|{\alpha_{x_{1}\ldots \hat{x_{p}}\ldots \hat{x_{q}}\ldots x_{k-1},x_{k}}}
\\
\\
{c_{x_{1}\ldots x_{p}\ldots x_{q}\ldots x_{k-1}}c_{x_{k}}}
	\ar@2[rr]^-{ \alpha_{x_{1}\ldots x_{p}\ldots x_{q}\ldots x_{k-1}, x_{k}} }
&& {c_{x_{1}\ldots \widehat{x_{p}}\ldots \widehat{x_{q}}\ldots x_{k}}}
}
\]
where the  symbol  $\hat{x}$ means that $x$ is removed.

In addition, any rules $\gamma_{y_1,\ldots,y_k}$ in $\Gamma^{\text{c}(n)}_2$ can be obtained using those in $\Omega_{2}$, according to the following diagram
\[
\xymatrix @C=0.75em @!C{
c_{y_{1}}\ldots c_{y_{k}} 
	\ar@2[rrrr] ^{\gamma_{y_1,\ldots,y_k}}
	\ar@2[dd]|{\alpha_{y_{1},y_{2}}c_{y_{3}}\ldots c_{y_{k}}}
&&&& c_{y_{1}\ldots y_{k}}
\\
\\
{c_{y_{1}y_{2}}}c_{y_{3}}\ldots c_{y_{k}}
	\ar@2[rr]^{\alpha_{y_{1}y_{2},y_{3}}c_{y_{4}}\ldots c_{y_{k}}}
&&(\cdots)
	\ar@2[rr]^{\alpha_{y_{1}\ldots y_{k-2},y_{k-1}}c_{y_{k}}}
&& c_{y_{1}\ldots y_{k-1}}c_{y_{k}}
	\ar@2[uu] |{\alpha_{y_{1}\ldots y_{k-1},y_{k}}}
}
\]
\newline

\end{proof}

\begin{theorem}
\label{maintheorem}
The $2$-polygraph $\Sigma^{\text{acol(n)}}$ is a finite convergent presentation of the monoid $\mathbf{P_{n}}(C)$.
\end{theorem}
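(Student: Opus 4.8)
The three ingredients already in place are that $\Sigma^{\text{acol(n)}}$ is finite (Lemma~\ref{Finited lemma}), that it presents $\mathbf{Pl}(C)$ (the Tietze-equivalence lemma just above), and the general fact recalled in Section~2.1 that a terminating $2$-polygraph is convergent as soon as every $1$-cell admits a unique normal form. So the plan is to establish two things: that $\Sigma^{\text{acol(n)}}$ terminates, and that normal forms are unique.

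For termination I would check that every rule $\alpha_{u,v}\colon c_{u}c_{v}\dfl c_{w}c_{w'}$ of $\Omega_{2}$ strictly decreases its source for the well-order $\prec$ on $\Gamma_{1}^{*}$, which one checks is compatible with left and right multiplication, hence a reduction order. If $P(uv)$ is a single column, the target has length $1$ while the source has length $2$, so we are done. Otherwise, by Lemma~\ref{First Lemma} the tableau $P(uv)$ has exactly two columns $W'W$, and by Lemma~\ref{Second Lemma} its right column $W$ is strictly shorter than $U$; hence $c_{w}\sqsubset c_{u}$, and therefore $c_{w}c_{w'}\prec c_{u}c_{v}$ directly from the definition of $\prec$. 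Thus $\Sigma^{\text{acol(n)}}$ is terminating.

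For uniqueness of normal forms, first note that a word $c_{v_{1}}\cdots c_{v_{m}}$ of $\Gamma_{1}^{*}$ is a normal form exactly when no factor $c_{v_{i}}c_{v_{i+1}}$ is the source of a rule of $\Omega_{2}$, that is, when $V_{i}\succeq V_{i+1}$ for all $i$; equivalently, $v_{1}\cdots v_{m}$ is the reading of the symplectic tableau whose columns, from left to right, are $V_{m}\preceq\cdots\preceq V_{1}$. Now let $N=c_{v_{1}}\cdots c_{v_{m}}$ and $N'=c_{v'_{1}}\cdots c_{v'_{m'}}$ be two normal forms of one and the same word $W$. Every rule of $\Omega_{2}$ holds in $\mathbf{Pl}(C)$ under $c_{a}\mapsto a$, this being the Remark $c_{u}c_{v}\equiv uv\equiv\textrm{w}(P(uv))\equiv c_{w}c_{w'}$, so $N$, $N'$ and $W$ all represent the same element of $\mathbf{Pl}(C)$, whence $v_{1}\cdots v_{m}\sim v'_{1}\cdots v'_{m'}$ in $\mathcal{C}_{n}^{*}$. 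Both of these words are readings of symplectic tableaux $T$ and $T'$; since $P$ fixes the reading of a symplectic tableau, Theorem~\ref{Cross section} gives $T=P(v_{1}\cdots v_{m})=P(v'_{1}\cdots v'_{m'})=T'$, so the two readings coincide, $m=m'$, $v_{i}=v'_{i}$ for all $i$, and $N=N'$. Together with finiteness and termination, this proves that $\Sigma^{\text{acol(n)}}$ is a finite convergent presentation of $\mathbf{Pl}(C)$.

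The main obstacle has in fact been cleared already in Lemmas~\ref{First Lemma} and~\ref{Second Lemma}: the analysis of Lecouvey's insertion showing $P(uv)$ has at most two columns with the right one strictly shorter than $U$ is exactly what powers the termination argument. What remains, namely checking that $\prec$ really does orient every rule downwards and deriving uniqueness of normal forms from the cross-section property, is short; the only point deserving attention is that the normal forms of $\Sigma^{\text{acol(n)}}$ are precisely the words encoding symplectic tableaux and that such a tableau is reconstructed from its reading word.
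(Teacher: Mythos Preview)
Your proposal is correct and follows essentially the same approach as the paper: termination is proved via the well-order $\prec$ by splitting into the one-column and two-column cases and invoking Lemma~\ref{Second Lemma} for the latter, and confluence is obtained by identifying normal forms with readings of symplectic tableaux and appealing to Theorem~\ref{Cross section}. The only cosmetic differences are that you make explicit the compatibility of $\prec$ with left and right multiplication (which the paper handles implicitly by carrying the contexts $p,q$ through) and that you spell out the characterisation of normal forms as tableau words before invoking the cross-section property.
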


\begin{proof}
By Lemma~\ref{Finited lemma}, the $2$-polygraph $\Sigma^{\text{acol(n)}}$ is finite. Let us show that it is also convergent.
First, in order to prove the termination of $\Sigma^{\text{acol(n)}}$, we show that if $h\odfl{} h'$  then $h'\prec h$. One finds two cases.

\noindent {First case :} let $h=pc_{u}c_{v}q$ and $h'=pc_{w}q$, with $p,q\in \Gamma_{1}^{*}$ and $c_{u}, c_{v}, c_{w} \in\Gamma_{1}$. One remarks that $h'$ is shorter than $h$, then $h'\prec h$.

\noindent{Second case :} let $h=pc_{u}c_{v}q$ and $h'=pc_{w}c_{w'}q$, with $p,q\in \Gamma_{1}^{*}$ and $c_{u},c_{v},c_{w},c_{w'}\in\Gamma_{1}$, where $w$ and $w'$ are respectively  the readings of the right and left columns of $P(uv)$.  One remarks that $h$ and $h'$ have the same length. By Lemma~\ref{Second Lemma} the length of $u$ is strictly larger than the length of $w$, then $c_{w}\sqsubset c_{u}$. Consider $i=l(p)+1$, $c_{u_{i}}=c_{w}$ and $c_{u}=c_{v_{i}}$ then we have $c_{u_{i}}\sqsubset c_{v_{i}}$ and for all $j<i$, $c_{u_{j}} = c_{v_{j}}$. Hence $h'\prec h$. Since every application of a $2$-cell of $\Omega_{2}$ yields a $\prec$-preceding word, it follows that any sequence of rewriting using $\Omega_{2}$ must terminate.
Hence, the $2$-polygraph  $\Sigma^{\text{acol(n)}}$ is terminating.
\medskip

\noindent Let us show the confluence of the $2$-polygraph $\Sigma^{\text{acol(n)}}$. Let $h\in \Gamma_{1}^{*}$ and  $h'$, $h''$ be two normal forms obtained from $h$. We have to prove that $h'=h''$. Suppose that $h'=c_{u_{k}}\ldots c_{u_{1}}$. Since $h'$ is a normal form, the words $u_{1}, \ldots, u_{k}$ are respectively the readings of $k$  admissible columns $U_{1},\ldots, U_{k} $ of a symplectic tableau, i.e, $U_{i}\preceq U_{i+1}, \forall i$. Then $u_{k}\ldots u_{1}=\textrm{w}(T')$, where $T'$ is the unique symplectic tableau  such that $$ \textrm{w}(T')= u_{k}\ldots u_{1}\equiv h'.$$
Similarly, $h'' = c_{v_{l}}\ldots c_{v_{1}}$ is a normal form, then there exists a unique symplectic tableau $T''$ such that  
$$ \textrm{w}(T'')= v_{l}\ldots v_{1}\equiv h''.$$
Since $h\equiv h'\equiv h''$, we have  by Theorem~\ref{Cross section} that $T'=T''$. Then  we have $k=l$ and $u_{i}=v_{i}, \forall i=1,\ldots, k.$ Thus $h'=h''.$ 

Hence, the $2$-polygraph $\Sigma^{\text{acol(n)}}$ is convergent.
\end{proof}

%%%%%%%%%%%%%%%%%%%%%%%%%%%%%%%%%%%%%%%%%%%%%%%%%%%%%%%%%%%%%%%%
\subsection{Finiteness properties of plactic monoid of type C}
%%%%%%%%%%%%%%%%%%%%%%%%%%%%%%%%%%%%%%%%%%%%%%%%%%%%%%%%%%%%%%%%

A monoid is of \emph{finite derivation type} ($\mathrm{FDT}_3$) if it admits a finite presentation whose relations among the relations are finitely generated, see~\cite{Squier94}. The property $\mathrm{FDT}_3$ is a natural extension of the properties of being finitely generated $(\mathrm{FDT}_1)$ and finitely presented $(\mathrm{FDT}_2)$. Using the notion of polygraphic resolution,  one can define the higher-dimensional finite derivation type properties $\mathrm{FDT}_{\infty}$, see~\cite{GuiraudMalbos12advances}. They generalise in any dimension the finite derivation type $\mathrm{FDT}_{3}$.  A monoid is said to be $\mathrm{FDT}_{\infty}$ if it admits a finite polygraphic resolution. By Corollary~4.5.4 in~\cite{GuiraudMalbos12advances}, a monoid with a finite convergent presentation is $\mathrm{FDT}_{\infty}$. Then by Theorem~\ref{maintheorem}, we have

\begin{proposition}
Plactic monoids of type C  satisfy the homotopical finiteness condition $\mathrm{FDT}_{\infty}$.
\end{proposition}

In the homological way, a monoid $M$ is of \emph{homological type} $\mathrm{FP}_{\infty}$ when there exists a resolution of $M$ by projective, finitely generated $\mathbb{Z}M$-modules. By Corollary~5.4.4 in~\cite{GuiraudMalbos12advances} the property $\mathrm{FDT}_{\infty}$ implies the property $\mathrm{FP}_{\infty}$. Hence we have

\begin{proposition}
Plactic monoids of type C satisfy the homological finiteness property type $\mathrm{FP}_{\infty}$.
\end{proposition}

Starting with the column presentation $\Sigma^{\text{acol(n)}}$ of the monoid $\mathbf{P_{n}}(C)$, we hope  to construct  a polygraphic resolution of  $\mathbf{P_{n}}(C)$ by studying the confluence of all the critical branchings of  the presentation.

%%%%%%%%%%%%%%%%%%%%%%%%%%%%%%
%%%%%%%%%%%%%%%%%%%%%%%%%%%%%%
\section*{Acknowledgments}
%%%%%%%%%%%%%%%%%%%%%%%%%%%%%%
%%%%%%%%%%%%%%%%%%%%%%%%%%%%%

The author wishes to thank the anonymous referees for their comments and   meaningful suggestions that helped to produce an improved version of this  work. Special thanks are due to Stephane Gaussent and Philippe Malbos for their advices and comments as well as for every useful discussions.

%%%%%%%%%%%%%%%%%%%%%%%%%%%%%%%%%%%%%%%%%%%%%%%%%%%%%
% Bibliographie
%%%%%%%%%%%%%%%%%%%%%%%%%%%%%%%%%%%%%%%%%%%%%%%%%%%%%
%%%%%%%%%%%%%%%%%%%%%%%%%%%%%%%%%%%%%%%%%%%%%%%%%%%%%

\begin{small}
\renewcommand{\refname}{\Large\textsc{References}}
\bibliographystyle{alpha}%amsplain}
\bibliographystyle{plain}
\bibliography{biblioarticletypeC}

\def\cprime{$'$}
\begin{thebibliography}{JMMO91}

\bibitem[Ani86]{Anick86}
David~J. Anick.
\newblock On the homology of associative algebras.
\newblock {\em Trans. Amer. Math. Soc.}, 296(2):641--659, 1986.

\bibitem[Bak00]{Baker00}
T.~H. Baker.
\newblock An insertion scheme for {$C_n$} crystals.
\newblock In {\em Physical combinatorics ({K}yoto, 1999)}, volume 191 of {\em
  Progr. Math.}, pages 1--48. Birkh\"auser Boston, Boston, MA, 2000.

\bibitem[BCCL15]{BokutChenChenLi15}
L.~A. Bokut, Yuqun Chen, Weiping Chen, and Jing Li.
\newblock New approaches to plactic monoid via {G}r\"obner--{S}hirshov bases.
\newblock {\em J. Algebra}, 423:301--317, 2015.

\bibitem[Bou68]{BourbakiLie4-6}
N.~Bourbaki.
\newblock {\em \'{E}l\'ements de math\'ematique. {F}asc. {XXXIV}. {G}roupes et
  alg\`ebres de {L}ie. {C}hapitre {IV}: {G}roupes de {C}oxeter et syst\`emes de
  {T}its. {C}hapitre {V}: {G}roupes engendr\'es par des r\'eflexions.
  {C}hapitre {VI}: syst\`emes de racines}.
\newblock Actualit\'es Scientifiques et Industrielles, No. 1337. Hermann,
  Paris, 1968.

\bibitem[CGM14]{CainGrayMalheiro14}
A.~J. {Cain}, R.~D. {Gray}, and A.~{Malheiro}.
\newblock {Crystal bases, finite complete rewriting systems, and biautomatic
  structures for Plactic monoids of types \$A\_n\$, \$B\_n\$, \$C\_n\$,
  \$D\_n\$, and \$G\_2\$}.
\newblock {\em ArXiv e-prints}, December 2014.

\bibitem[CGM15]{CainGrayMalheiro15}
Alan~J. Cain, Robert~D. Gray, and Ant{\'o}nio Malheiro.
\newblock Finite {G}r\"obner--{S}hirshov bases for {P}lactic algebras and
  biautomatic structures for {P}lactic monoids.
\newblock {\em J. Algebra}, 423:37--53, 2015.

\bibitem[Ful97]{Fulton97}
William Fulton.
\newblock {\em Young tableaux}, volume~35 of {\em London Mathematical Society
  Student Texts}.
\newblock Cambridge University Press, Cambridge, 1997.
\newblock With applications to representation theory and geometry.

\bibitem[GGM15]{GaussentGuiraudMalbos14}
St{\'e}phane Gaussent, Yves Guiraud, and Philippe Malbos.
\newblock Coherent presentations of {A}rtin monoids.
\newblock {\em Compos. Math.}, 151(5):957--998, 2015.

\bibitem[GM12]{GuiraudMalbos12advances}
Yves Guiraud and Philippe Malbos.
\newblock Higher-dimensional normalisation strategies for acyclicity.
\newblock {\em Adv. Math.}, 231(3-4):2294--2351, 2012.

\bibitem[GM14]{GuiraudMalbos14}
Yves Guiraud and Philippe Malbos.
\newblock Polygraphs of finite derivation type.
\newblock submitted, arXiv:1402.2587, 2014.

\bibitem[JMMO91]{JimboMisraMiwaOkado91}
Michio Jimbo, Kailash~C. Misra, Tetsuji Miwa, and Masato Okado.
\newblock Combinatorics of representations of $\textsc{U}_q(\widehat{sl}(n))$
  at $q=0$.
\newblock {\em Comm. Math. Phys.}, 136(3):543--566, 1991.

\bibitem[Kas91]{Kashiwara91}
Masaki Kashiwara.
\newblock Crystallizing the {$q$}-analogue of universal enveloping algebras.
\newblock In {\em Proceedings of the {I}nternational {C}ongress of
  {M}athematicians, {V}ol.\ {I}, {II} ({K}yoto, 1990)}, pages 791--797. Math.
  Soc. Japan, Tokyo, 1991.

\bibitem[Kas95]{Kashiwara94}
Masaki Kashiwara.
\newblock On crystal bases.
\newblock In {\em Representations of groups ({B}anff, {AB}, 1994)}, volume~16
  of {\em CMS Conf. Proc.}, pages 155--197. Amer. Math. Soc., Providence, RI,
  1995.

\bibitem[KN85]{KapurNarendran85}
Deepak Kapur and Paliath Narendran.
\newblock A finite {T}hue system with decidable word problem and without
  equivalent finite canonical system.
\newblock {\em Theoret. Comput. Sci.}, 35(2-3):337--344, 1985.

\bibitem[KN94]{KashiwaraNakashima94}
Masaki Kashiwara and Toshiki Nakashima.
\newblock Crystal graphs for representations of the {$q$}-analogue of classical
  {L}ie algebras.
\newblock {\em J. Algebra}, 165(2):295--345, 1994.

\bibitem[Knu70]{Knuth70}
Donald~E. Knuth.
\newblock Permutations, matrices, and generalized {Y}oung tableaux.
\newblock {\em Pacific J. Math.}, 34:709--727, 1970.

\bibitem[KO14]{KubatOkninski14}
{\L}ukasz Kubat and Jan Okni{\'n}ski.
\newblock Gr\"obner-{S}hirshov bases for plactic algebras.
\newblock {\em Algebra Colloq.}, 21(4):591--596, 2014.

\bibitem[Kob90]{Kobayashi90}
Yuji Kobayashi.
\newblock Complete rewriting systems and homology of monoid algebras.
\newblock {\em J. Pure Appl. Algebra}, 65(3):263--275, 1990.

\bibitem[Lec02]{Lecouvey02}
C{\'e}dric Lecouvey.
\newblock Schensted-type correspondence, plactic monoid, and jeu de taquin for
  type {$C_n$}.
\newblock {\em J. Algebra}, 247(2):295--331, 2002.

\bibitem[Lec03]{Lecouvey03}
C{\'e}dric Lecouvey.
\newblock Schensted-type correspondences and plactic monoids for types {$B_n$}
  and {$D_n$}.
\newblock {\em J. Algebraic Combin.}, 18(2):99--133, 2003.

\bibitem[Lit96]{Littelmann96}
Peter Littelmann.
\newblock A plactic algebra for semisimple {L}ie algebras.
\newblock {\em Adv. Math.}, 124(2):312--331, 1996.

\bibitem[LLT95]{LascouxLeclercThibon95}
Alain Lascoux, Bernard Leclerc, and Jean-Yves Thibon.
\newblock Crystal graphs and {$q$}-analogues of weight multiplicities for the
  root system {$A_n$}.
\newblock {\em Lett. Math. Phys.}, 35(4):359--374, 1995.

\bibitem[Lop14]{Lopatkin14}
V.~Lopatkin.
\newblock {Cohomology Rings of Plactic Monoid Algebra via {G}r\"obner-Shirshov
  Basis}.
\newblock {\em ArXiv e-prints}, November 2014.

\bibitem[Lot02]{Lothaire02}
M.~Lothaire.
\newblock {\em Algebraic combinatorics on words}, volume~90 of {\em
  Encyclopedia of Mathematics and its Applications}.
\newblock Cambridge University Press, Cambridge, 2002.

\bibitem[LS81]{LascouxSchutzenberger81}
Alain Lascoux and Marcel-P. Sch{\"u}tzenberger.
\newblock Le mono\"\i de plaxique.
\newblock In {\em Noncommutative structures in algebra and geometric
  combinatorics ({N}aples, 1978)}, volume 109 of {\em Quad. ``Ricerca Sci.''},
  pages 129--156. CNR, Rome, 1981.

\bibitem[Sch61]{Schensted61}
C.~Schensted.
\newblock Longest increasing and decreasing subsequences.
\newblock {\em Canad. J. Math.}, 13:179--191, 1961.

\bibitem[She99]{Sheats99}
Jeffrey~T. Sheats.
\newblock A symplectic jeu de taquin bijection between the tableaux of {K}ing
  and of {D}e {C}oncini.
\newblock {\em Trans. Amer. Math. Soc.}, 351(9):3569--3607, 1999.

\bibitem[SOK94]{Squier94}
Craig~C. Squier, Friedrich Otto, and Yuji Kobayashi.
\newblock A finiteness condition for rewriting systems.
\newblock {\em Theoret. Comput. Sci.}, 131(2):271--294, 1994.

\end{thebibliography}
\end{small}
\vfill

\noindent \textsc{Nohra Hage}\\
\begin{small}
Universit\'e de Lyon,\\
Institut Camille Jordan, CNRS UMR 5208\\
Universit\'e Jean Monnet\\
23, boulevard du docteur Paul Michelon,\\
42000 Saint-Etienne cedex, France.\\
\textsf{ nohra.hage@univ-st-etienne.fr}
\end{small}

\end{document}